%!TEX root = main.tex
%%%%%%%%%%%%%%%%%%%%%%%%%%%%%%%%%%%%%%%%%%%%%%%%%%%%%%%%%%%%%%%%%%%%%%%%%

\documentclass[11pt,final]{article}

\usepackage{amsmath, amsfonts}
\numberwithin{equation}{section}
\numberwithin{table}{section}
\numberwithin{figure}{section}
\usepackage{bbm}
\usepackage[headings]{fullpage}
\usepackage{graphicx}
\usepackage{caption}
\usepackage{enumerate}
\usepackage{multirow}
\usepackage{float}
\usepackage[%pdfmark,%
            colorlinks=true,%
            breaklinks=true,%
            linkcolor=blue,
            urlcolor=blue,%
            citecolor=blue,%
            pdfauthor={D. Grundvig and M. Heinkenschloss},%
            bookmarksopen=false,%
            pdfpagemode=None]{hyperref}
\usepackage{times}
\usepackage{tikz}
\usepackage{color}
\usepackage{mathrsfs}
\usepackage[ruled,vlined]{algorithm2e}
\usepackage{algorithmic}

%!TEX root = ./main.tex
%%%%%%%%%%%%%%%%%%%%%%%%%%%%%%%%%%%%%%%%%%%%%%%%%%%%%%%%%%%%%%%%%%%%%%

\newcommand{\bpm}{\begin{pmatrix}}
\newcommand{\epm}{\end{pmatrix}}

% Common abbreviations
% (Remember to put '\ ' after if an interword space is
%  desired rather than end-of-sentence space. Same for '.etc)' ).
		% e.g.
		% i.e.
		% etc.
		% vs.
		% s.t.

% Column vector
\newcount\colveccount
\newcommand*\colvec[1]{
        \global\colveccount#1
        \begin{bmatrix}
        \colvecnext
}
\def\colvecnext#1{
        #1
        \global\advance\colveccount-1
        \ifnum\colveccount>0
                \\
                \expandafter\colvecnext
        \else
                \end{bmatrix}
        \fi
}

% Mathematical Symbols

  %machine epsilon
%\newcommand {\real} {I\!\!R}
%\newcommand {\nat} {{I\!\!N}}
%\newcommand {\compl} {C\!\!\!\!I\;}
\newcommand {\real} {\mathbb{R}}

\newcommand {\cB} {\mathcal{B}}

\newcommand {\cF} {\mathcal{F}}

\newcommand{\BG}{\ensuremath{\mathbf{G}} } %
\newcommand{\BI}{\ensuremath{\mathbf{I}} } %
\newcommand{\BK}{\ensuremath{\mathbf{K}} } %
\newcommand{\BQ}{\ensuremath{\mathbf{Q}} } %
\newcommand{\BR}{\ensuremath{\mathbf{R}} } %
\newcommand{\BT}{\ensuremath{\mathbf{T}} } %
\newcommand{\BV}{\ensuremath{\mathbf{V}} } %
 %
 %
 %
 %

% Expectation symbol

% Variance symbol

% Covariance symbol

% Argmin 

 %
 %
 %
 %
 %
 %
 %

 %
 %
 %
\newcommand{\bd}{\ensuremath{\mathbf{d}}} %
\newcommand{\bg}{\ensuremath{\mathbf{g}}} %
\newcommand{\bp}{\ensuremath{\mathbf{p}}} %
\newcommand{\bu}{\ensuremath{\mathbf{u}}} %
\newcommand{\bv}{\ensuremath{\mathbf{v}}} %
\newcommand{\by}{\ensuremath{\mathbf{y}}} %
\newcommand{\bzero}{\ensuremath{\mathbf{0}}} %

\newcommand {\blambda} {\mbox{\boldmath $\lambda$}}

%\newcommand {\BGamma} {\mbox{\boldmath $\Gamma$}}

%\newcommand{\bth}{\ensuremath{\mathbf{\theta}}} %

%\newcommand {\bp} {\mathbf{p}}
%\newcommand {\bs} {\mathbf{s}}
%\newcommand {\bu} {\mathbf{u}}
%\newcommand {\bv} {\mathbf{v}}
%\newcommand {\bw} {\mathbf{w}}
%\newcommand {\by} {\mathbf{y}}
%\newcommand {\bz} {\mathbf{z}}

         % state + control space
         % state space
         % control space
  % control space
         % range space
          % null space
         % adjoint space
         
   % plus-equals for algorithms

%   Mathematical Environments
%

\newtheorem{theorem}{Theorem}[section]
\newtheorem{lemma}[theorem]{Lemma}
\newtheorem{corollary}[theorem]{Corollary}

\newtheorem{assumptions}[theorem]{Assumptions}

\newenvironment{proof}%
               {\noindent {\bf Proof:} }%
               {\hfill $\Box$ \\[1ex] }

               {\refstepcounter{theorem}\vspace{2ex}%
                \noindent{\bf Examples \thetheorem} }%
               {\hfill $\diamond$}

               {\refstepcounter{theorem}%
                \noindent{\bf Example \thetheorem} }%
               {\hfill $\diamond$}

\newcounter{problem} %[section]
\renewcommand{\theproblem} {\arabic{problem}}
               {\refstepcounter{problem} \vspace{2ex}%
                \noindent{\bf Problem \theproblem} }%
               { }

               {\noindent{\bf Solution}}%
               {\hfill $\Box$ \\[1ex]}

%

% \boxfig{pos}{wid}{text}:  A figure with a box around it
%
% pos	the usual figure placement arg: eg. htbp
% wid	the width of the figure, in some units: eg. 5in
% text	the contents of the figure, including picture/caption/label/etc
%
\newlength{\boxwidth}

% use \fullboxwidth for arg 2 of boxfigure to get box of size \textwidth

% \boxalg{pos}{text}:  An algorithm with a box around it
%
% title  the name of the algorithm
% text   the contents of the figure, including picture/caption/label/etc
%

%\newtheorem{algorithm}[theorem]{Algorithm}

% see above
\newlength{\fullboxwidth}
\setlength{\fullboxwidth}{\textwidth}
\addtolength{\fullboxwidth}{-0.5in}

\newlength{\fullinboxwidth}
\setlength{\fullinboxwidth}{\fullboxwidth}
\addtolength{\fullinboxwidth}{-0.5in}

\SetKwComment{TComment}{$\triangleright\;$}{}

\begin{document}
\title{{\Large \bf A Generalized $\ell_1$-Merit Function SQP Method Using Function Approximations with Tunable Accuracy}
         \thanks{This research was supported in part by AFOSR Grant FA9550-22-1-0004, NSF Grant DMS-2231482,  and a 
                      2022 National Defense Science and Engineering Graduate (NDSEG) Fellowship }
         }
\author{Dane S.\ Grundvig
             \thanks{Department of Computational Applied Mathematics and Operations Research,
                      MS-134, Rice University, 6100 Main Street,
                     Houston, TX 77005-1892. 
                     E-mail: dsg12@rice.edu}
             \and
             Matthias Heinkenschloss
             \thanks{Department of Computational Applied Mathematics  and Operations Research,
                 MS-134, Rice University, 6100 Main Street,
                 Houston, TX 77005-1892, and the Ken Kennedy Institute, Rice University.
                 E-mail: heinken@rice.edu}
        }

\date{\today}

\maketitle

\begin{abstract}
This paper develops a generalization of the line-search sequential quadratic programming (SQP) algorithm with $\ell_1$-merit function
that uses objective and constraint function approximations with tunable accuracy to solve smooth equality-constrained optimization problems. 
The evaluation of objective and constraint functions and their gradients is potentially computationally expensive, but it is assumed that 
one can construct effective, computationally inexpensive models of these functions.   This paper specifies how these models can be 
used to generate new iterates.  
At each iteration, the models have to satisfy function error and relative gradient error tolerances determined by the algorithm based on
 its progress. Moreover, bounds for the model errors are used to explore regions where the combined objective function and constraint 
 models are sufficiently accurate. 
 The algorithm has the same first-order global convergence properties as a line-search SQP algorithm with $\ell_1$-merit function, 
 but only uses objective and constraint function models and the model error bounds. 
 The algorithm is applied to  a discretized boundary control problem in which the evaluation of the objective and constraint functions
 requires the solution of the  Boussinesq partial differential equation (PDE).
 The models are constructed from projection-based reduced-order models of the Boussinesq PDE.
 \end{abstract}

\noindent
{\bf Keywords.} Constrained optimization, sequential quadratic programming,  line-search, inexact functions, inexact gradients, reduced order model.                     

%!TEX root = main.tex
%%%%%%%%%%%%%%%%%%%%%%%%%%%%%%%%%%%%%%%%%%%%%%%%%%%%%%%%%%%%%%%%%%%%%%%%%
\section{Introduction}
We develop a generalization of the line-search sequential quadratic programming (SQP) algorithm with $\ell_1$-merit function
that uses objective function and constraint function models with tunable accuracy to solve smooth equality-constrained optimization problems.
Given smooth functions $f:\real^n\rightarrow\real$ and $c:\real^n\rightarrow\real^m$, $m < n$, 
we consider the solution of 
\begin{subequations} \label{eq:exact-nlp}
\begin{align}
  \min \quad &  f(x), \\
  \text{s.t.}\quad & c(x)=0.
\end{align}
\end{subequations}
We assume that  evaluations of $f$ and $c$ and their derivatives are computationally expensive, e.g., because
an evaluation of $f$ and $c$ at $x$ requires an expensive simulation, but that one can compute differentiable,
effective, computationally inexpensive to evaluate models $m_k$ and $h_k$ of the objective function around the
current iterate $x_k$.
Instead of a traditional line-search SQP method that uses a quadratic model of the Lagrangian
\begin{equation} \label{eq:exact-lagrangian}
  \mathcal{L}(x,\lambda) = f(x) - \lambda^Tc(x)
\end{equation}
associated with \eqref{eq:exact-nlp} and a linear model of the constraints to compute a step, and,
ultimately, a new iterate, our algorithm uses models $m_k:\real^n\rightarrow\real$ and $h_k:\real^n\rightarrow\real^m$,
of the objective and constraint function built around the current iterate $x_k$ to generate a new iterate by approximately 
solving subproblems of the type
\begin{subequations} \label{eq:exact-nlp-sub}
\begin{align}
  \min \quad &  m_k(x), \\
  \text{s.t.}\quad & h_k(x)=0.
\end{align}
\end{subequations}
Because $m_k$, $h_k$,  and their derivatives can be evaluated inexpensively, an approximate solution
of \eqref{eq:exact-nlp-sub} can be computed efficiently. If $m_k$ and $h_k$ are good approximations of $f$ and $c$,
respectively, a new iterate generated by approximately solving \eqref{eq:exact-nlp-sub} is expected to be a better
approximation of the solution of  \eqref{eq:exact-nlp} than the current iterate $x_k$ is. Unfortunately, the models
$m_k$, $h_k$ are not good models of $f$, $c$ globally, but only in certain regions of the optimization variable space.
In this paper, we specify what approximation properties are required of these models at the current iterate $x_k$,  
and how exactly these models can be used to generate a new iterate using a generalization of the line-search SQP method 
with $\ell_1$-merit function.
While once generated, models $m_k$, $h_k$, and their derivatives are computationally inexpensive to evaluate, the computation of
these models carries a computational cost. Therefore, it is desirable to use current models as much as possible. 
This paper extends our recent work \cite{DSGrundvig_MHeinkenschloss_2025a} on unconstrained problems to the constrained
case \eqref{eq:exact-nlp} and demonstrates the potential of the new algorithm on a boundary control problem governed by the
Boussinesq partial differential equation (PDE).

Our set-up is motivated by problems \eqref{eq:exact-nlp} with objective and constraint functions of the form
\begin{equation} \label{eq:exact-nlp-implicit-constraints}
    f(x) = \widehat f(y(x);x), \qquad
    c(x) = \widehat c(y(x);x),
\end{equation}
where $f:\real^N \times \real^n \rightarrow\real$ and $c: \real^N \times \real^n \rightarrow\real^m$, and where $y(x) \in \real^N$, $N \gg 1$ ,
solves a large-scale system $R(y(x);x)=0$, where $R:\real^N \times \real^n \rightarrow\real^N$ often represents a discretized
PDE and $y(x) \in \real^N$ represents the discretized PDE solution for given parameters or controls $x \in \real^n$.
The system $R(y(x);x)=0$ is also referred to as the full order model (FOM).
A projection-based reduced order model (ROM) generates $V, W \in \real^{N \times r}$, $r\ll N$, and approximates $y(x)$ by
$V \widehat{y}(x)$, where $ \widehat{y}(x) \in \real^r$ solves the smaller scale ROM  $W^T R(V \widehat{y}(x);x)=0$. For
many nonlinear systems, so-called hyperreduction methods are needed to make the solution of the resulting ROM truly computationally
inexpensive. For details on ROMs see, e.g.,
\cite{PBenner_ACohen_MOhlberger_KWillcox_2017a}, \cite{PBenner_SGrivet-Talocia_AQuarteroni_GRozza_WSchilders_LMSilveira_2021b},
\cite{JSHesthaven_GRozza_BStamm_2015a}, \cite{AQuarteroni_AManzoni_FNegri_2016a}.
A ROM implies models 
\begin{equation} \label{eq:exact-nlp-implicit-constraints-models}
    m_k(x) = \widehat f(V \widehat{y}(x);x), \qquad
    h_k(x) = \widehat c(V \widehat{y}(x);x)
\end{equation}
of the objective and constraint functions \eqref{eq:exact-nlp-implicit-constraints}. The index $k$ signals that the ROM,
i.e, the matrices $V, W \in \real^{N \times r}$, are built from FOM information at the iterate $x_k$ and, possibly, at previous 
iterates. The approximation properties of the models \eqref{eq:exact-nlp-implicit-constraints-models} and of their derivatives can be controlled
by the selection of $V, W \in \real^{N \times r}$. We use ROMs to compute function approximations in the boundary control problem governed by the
Boussinesq PDE.
However, we note that our generalization of the line-search SQP algorithm with $\ell_1$-merit function is agnostic about how the 
function approximations are computed. It specifies approximation properties that these function approximations have to satisfy.
ROM-generated approximations are one way to generate these models.

There are many approaches for smooth unconstrained optimization problems that allow objective function models with tunable accuracy,
including  \cite{NAlexandrov_JEDennis_RMLewis_VTorczon_1998},  \cite{RGCarter_1991a}, \cite{RGCarter_1993a}, 
\cite{DPKouri_MHeinkenschloss_DRidzal_BGvanBloemenWaanders_2014a},
\cite{DSGrundvig_MHeinkenschloss_2025a},
\cite{YYue_KMeerbergen_2013a}. 
These and related approaches have been used, e.g., in
\cite{TKeil_LMechelli_MOhlberger_FSchindler_SVolkwein_2021a}, 
\cite{EQian_MAGrepl_KVeroy_KWillcox_2017a},
\cite{TWen_MJZahr_2023a},
\cite{MYano_THuang_MJZahr_2021a},
\cite{MJZahr_CFarhat_2015a} to manage the construction of ROMs to accelerate PDE-constrained optimization problems.
Our proposed algorithm builds on \cite{DSGrundvig_MHeinkenschloss_2025a}, \cite{YYue_KMeerbergen_2013a} and it
is equal to the approach in \cite{DSGrundvig_MHeinkenschloss_2025a}  if there are no constraints.
Line-search SQP methods with $\ell_1$-merit function go back to the work \cite{SPHan_1977a}, and have 
been successfully used to solve smooth equality-constrained problems \eqref{eq:exact-nlp}.
See, e.g.,  \cite[Ch.~17]{JFBonnans_JCGilbert_CLemarechal_CASagastizabal_2006a},
\cite{PTBoggs_JWTolle_1995a},
\cite[Ch.~18]{JNocedal_SJWright_2006a}, 
\cite{PEGill_EWong_2012a}.
In comparison to the unconstrained case, there are relatively few approaches to solve problems like \eqref{eq:exact-nlp}
using function models with tunable accuracy. Approaches include
\cite{WHess_SUlbrich_2013a},
\cite{TWen_MJZahr_2025a},
\cite{JCZiems_SUlbrich_2011a}, 
\cite{JCZiems_2013a}.
The last two papers consider models derived from PDE discretizations and include the PDE as
an explicit constraint. This is different from the setting we tackle, where a discretized PDE appears implicitly, if at all.
The paper \cite{WHess_SUlbrich_2013a} also considers a line-search SQP method with $\ell_1$-merit function,
for PDE-constrained optimization,  but in their problem, the constraints do not depend on the PDE solution and therefore
inexactness only appears in the objective function and its gradient, and inexactness is due to adaptive mesh refinement; no
ROMs are used. The paper \cite{TWen_MJZahr_2025a} uses an augmented Lagrangian approach and function 
approximations computed using ROMs with hyperreduction. The emphasis of \cite{TWen_MJZahr_2025a} is on
the integration of hyperreduced ROM into the solution of unconstrained optimization subproblems, but does not
analyze its impact in the outer augmented Lagrangian iteration. In contrast, we provide a convergence analysis of the 
complete proposed optimization framework.
We note that our problem setting is different from that in 
\cite{ASBerahas_FECurtis_DRobinson_BZhou_2021a}, \cite{FECurtis_DPRobinson_BZhou_2024a},
where stochastic methods are extended to equalty-constrained optimization.

This paper is organized as follows.
In the next Section~\ref{sec:exact-merit}, the classical  line-search SQP method 
with $\ell_1$-merit function is reviewed to inform our new algorithm development.
Section~\ref{sec:inexact-merit} develops the new algorithm using models $m_k$ and $h_k$
and subproblems related to \eqref{eq:exact-nlp-sub}. First, in Section~\ref{sec:inexact-merit-basic},
we will develop a basic algorithm using an idealized descent condition on which the convergence result is based. 
Then, in Section \ref{sec:inexact-algo-practical}, we will develop a practical algorithm that uses error bounds
for the models $m_k$ and $h_k$ to derive an implementable algorithm that enforces the idealized descent condition.
In Section~\ref{sec:numerics}, the new algorithm is applied to a boundary control problem governed by the
Boussinesq PDE, with models generated by ROMs.

%!TEX root = main.tex
%%%%%%%%%%%%%%%%%%%%%%%%%%%%%%%%%%%%%%%%%%%%%%%%%%%%%%%%%%%%%%%%%%%%%%%%%
\section{$\ell_1$-Merit Function SQP with Exact Function Information} \label{sec:exact-merit}
Our algorithm builds on the well-known SQP method with $\ell_1$-merit function.
See, for example \cite[Ch.~17]{JFBonnans_JCGilbert_CLemarechal_CASagastizabal_2006a},
\cite{PTBoggs_JWTolle_1995a},
\cite[Ch.~18]{JNocedal_SJWright_2006a}, 
\cite{PEGill_EWong_2012a}.
This section reviews the basic ingredients to provide background for the proposed
modifications necessary to allow the replacement of the objective and constraint functions by suitable models.

%%%%%%%%%%%%%%%%%%%%%%%%%%%%%%%%%%%%%%%%%%%%%%%%%%%%%%%%%%%%%%%%%%%%%%%%%
\subsection{Step Computation} \label{sec:exact-merit-step}
Let $c'(x)\in\real^{m\times n}$ denote the Jacobian of the constraints.
The first-order necessary conditions for  \eqref{eq:exact-nlp} are
\begin{equation} \label{eq:exact-fonc}
    \nabla f(x) - c'(x)^T\lambda =0 \quad \mbox{ and } \quad 
    c(x) =0.
\end{equation}
Given the iterate $x_k$, the search direction $s_k$ in the SQP method is chosen as the (approximate) solution of
\begin{subequations} \label{eq:s-prob}
  \begin{align} 
    \min \quad & \frac{1}{2} s^TH_ks + \nabla f(x_k)^T s, \label{eq:s-prob-a} \\
    \text{s.t.}\quad & c(x_k) + c'(x_k)s=0, \label{eq:s-prob-b}
  \end{align}
\end{subequations}
where $H_k\in \real^{n\times n}$ is a symmetric matrix that replaces the Hessian $\nabla^2_{xx} \mathcal{L}(x_k,\lambda_k)$.
The problem of finding the search direction can be seen as a quadratic approximation of the Lagrangian \eqref{eq:s-prob-a}
subject to a linearization of the equality constraint \eqref{eq:s-prob-b}.

Assume that  $c'(x_k) \in\real^{m\times n}$ has rank $m$.
If $H_k$ is symmetric positive semidefinite on the nullspace ${\cal N}(c'(x_k))$, then  $s_k \in \real^n$ solves \eqref{eq:s-prob} if and only if 
$s_k \in \real^n$, $\lambda_{k+1} \in \real^m$ solve  the following KKT system (see \cite[Sec. 18.2]{JNocedal_SJWright_2006a})
\begin{equation} \label{eq:exact-kkt}
  \begin{bmatrix} H_k & -c'(x_k)^T\\ c'(x_k) & 0\end{bmatrix}
  \begin{bmatrix} s_k \\ \lambda_{k+1}\end{bmatrix} 
 = -\begin{bmatrix} \nabla f(x_k) \\ c(x_k)\end{bmatrix}.
\end{equation}
Moreover, if $H_k$ is symmetric positive definite on the nullspace ${\cal N}(c'(x_k))$, then the KKT system \eqref{eq:exact-kkt}
has a unique solution and therefore, \eqref{eq:s-prob} has a unique solution.

One can also convert \eqref{eq:s-prob} to an unconstrained problem.
Again, assume that  $c'(x_k) \in\real^{m\times n}$ has rank $m$.
Let $s_k^c \in \real^n$ be a particular solution of \eqref{eq:s-prob-b}.
For example, $s_k^c$ could be set to the minimum norm solution of \eqref{eq:s-prob-b} given by
\begin{equation} \label{eq:s-prob-project-s0}
  s_k^c = - c'(x_k)^T (c'(x_k) c'(x_k)^T)^{-1} c(x_k).
\end{equation}
Let
\begin{equation} \label{eq:s-prob-project-P}
  P_k  = I - c'(x_k)^T (c'(x_k) c'(x_k)^T)^{-1} c'(x_k) \in \real^{n \times n}
\end{equation}
be the projection onto ${\cal N}(c'(x_k))$. If $m $ is small, \eqref{eq:s-prob-project-s0}
and \eqref{eq:s-prob-project-P} can be computed efficiently. 
Often the projection matrix \eqref{eq:s-prob-project-P} is not needed explicitly, but only
its application to vectors is required.

Because $s_k$ satisfies the linearized constraints \eqref{eq:s-prob-b} if and only if 
\begin{equation} \label{eq:s-prob-red-equivalence}
       s_k = s_k^c + P_k s_k^0, 
\end{equation}
the equality constrained quadratic program (QP) \eqref{eq:s-prob} is equivalent to the unconstrained problem
\begin{equation} \label{eq:s-prob-red}
    \min_{s^0 \in\real^n}\quad \frac{1}{2} (s^0)^T\big( P_k H_k P_k \big) s^0 + \Big( P_k \big( \nabla f(x_k) + H_k s_k^c \big) \Big)^T s^0.
\end{equation}
A vector $s_k \in \real^n$ solves the equality constrained QP \eqref{eq:s-prob} if and only
if there exists a solution $s_k^0 \in \real^n$ of \eqref{eq:s-prob-red} such that
\eqref{eq:s-prob-red-equivalence} holds.
The matrix $P_k H_k P_k$ is singular with nullspace  ${\cal N}(P_k H_k P_k) = {\cal N}(c'(x_k))^\perp = {\cal R}(c'(x_k)^T)$, but 
if $P_k H_k P_k$ is positive definite on ${\cal N}(c'(x_k))$, which is equivalent to $H_k$ being positive definite on ${\cal N}(c'(x_k))$, 
then \eqref{eq:s-prob-red} has a solution $s_k^0 \in \real^n$.
Because $P_k H_k P_k$ is singular with nullspace  ${\cal N}(P_k H_k P_k) = {\cal N}(c'(x_k))^\perp = {\cal R}(c'(x_k)^T)$,
a solution $s_k^0 \in \real^n$ of \eqref{eq:s-prob-red} is not unique, but $P_k  s_k^0$ is unique.
The unconstrained problem can be solved approximately, e.g., using the conjugate gradient method, which only uses
products of $P_k H_k P_k$ times a vector, but not $P_k H_k P_k$ explicitly.

If an approximate solution \eqref{eq:s-prob-red-equivalence} has been computed, the corresponding Lagrange multiplier
 $\lambda_{k+1} \in \real^m$ can be computed from the first equations in the KKT system \eqref{eq:exact-kkt}, i.e., 
 by solving $\min_{\lambda \in \real^m} \| c'(x_k)^T \lambda - \big( \nabla f(x_k) + H_k s_k \big) \|_2^2$.
 This gives
$\lambda_{k+1} =   (c'(x_k) c'(x_k)^T)^{-1} c'(x_k) \big( \nabla f(x_k) + H_k s_k \big)$.

%%%%%%%%%%%%%%%%%%%%%%%%%%%%%%%%%%%%%%%%%%%%%%%%%%%%%%%%%%%%%%%%%%%%%%%%%
\subsection{Step-Length Computation using $\ell_1$-Merit Function} \label{sec:exact-merit-length}

Given $s_k$, a solution to \eqref{eq:exact-kkt}, the proximal point $x_{k+1}$ will be chosen as
\begin{equation} \label{eq:x+1}
  x_{k+1} = x_k +\alpha_ks_k,
\end{equation}
where the step-size $\alpha_k$ is chosen to satisfy sufficient decrease in the $\ell_1$-merit function,
which is defined as
\begin{equation} \label{eq:exact-merit}
  \phi(x;\rho)= f(x) + \rho \| c(x)\|_1,
\end{equation}
where $\rho >0$ is the penalty parameter.

The sufficient decrease condition to be satisfied at each step is 
\begin{equation} \label{eq:exact-merit-sd}
  \phi(x_k +\alpha_ks_k;\rho_k)\leq \phi(x_k ;\rho_k) + c_1\alpha_k D(\phi(x_k ;\rho_k); s_k) 
\end{equation}
for $c_1\in(0,1)$ and $D(\phi(x_k ;\rho_k); s_k)$ the directional derivative of $\phi(x_k ;\rho_k)$ in the direction of $s_k$.
It can be shown (e.g., \cite[Proposition~17.1]{JFBonnans_JCGilbert_CLemarechal_CASagastizabal_2006a}
 or \cite[Thm.~18.2]{JNocedal_SJWright_2006a})
that the directional derivative of \eqref{eq:exact-merit} in a direction $s_k$ that satisfies $c'(x_k) s_k = -c(x_k)$
is
\begin{align} \label{eq:exact-dd}
  D(\phi(x_k ;\rho_k); s_k)  =  \nabla f(x_k)^Ts_k  - \rho_k \| c(x_k) \|_1.
\end{align}
From \eqref{eq:exact-kkt},
\[
  \nabla f(x_k)^Ts_k = -s_k^TH_ks_k + s_k^Tc'(x_k)^T\lambda_{k+1} =  -s_k^TH_ks_k - c(x_k)^T\lambda_{k+1} 
  \leq  -s_k^TH_ks_k + \|c(x_k)\|_1\|\lambda_{k+1}\|_\infty,
\]
which implies, along with \eqref{eq:exact-dd}, that 
\begin{equation} \label{eq:exact-dd-ineq}
  D(\phi(x_k ;\rho_k); s_k)\leq -s_k^TH_ks_k - (\rho_k -\|\lambda_{k+1}\|_\infty )\|c(x_k)\|_1.
\end{equation}

So, if $\rho_k>\|\lambda_{k+1}\|_\infty$ and $s_k^T H_k s_k \ge 0$,
then \eqref{eq:exact-dd-ineq} implies that $s_k$ is a descent direction for the merit function $\phi(\cdot; \rho_k)$ at $x_k$.
In this case, standard line-search methods, such as the backtracking line-search 
 \cite[p.~296]{JFBonnans_JCGilbert_CLemarechal_CASagastizabal_2006a},
\cite[Sec.~6.3.2]{JEDennis_RBSchnabel_1996}, \cite[Sec.~3.1]{JNocedal_SJWright_2006a}
can be used.  
Specifically, given $0<\beta_1\leq\beta_2<1$ and $\alpha_k^{(0)} > 0$,  we compute a steps size as follows.
If $\alpha_k^{(0)}$ satisfies  \eqref{eq:exact-merit-sd}, set $\alpha_k = \alpha_k^{(0)}$.
Otherwise, set  $i=1$.
Select $\alpha_k^{(i)}\in[\beta_1 \alpha_k^{(i-1)}, \beta_2 \alpha_k^{(i-1)}]$. 
If $\alpha_k^{(i)}$ satisfies  \eqref{eq:exact-merit-sd}, set $\alpha_k = \alpha_k^{(i)}$.
Otherwise replace $i$ by $i-1$ and repeat.

The condition $s_k^T H_k s_k \ge 0$ is guaranteed if $H_k$ is positive definite,  but weaker conditions on $H_k$ 
can be allowed.
The condition  $\rho_k>\|\lambda_{k+1}\|_\infty$ is enforced by adjusting the penalty parameter after computing the
step and the Lagrange multiplier $\lambda_{k+1}$ using
\begin{equation} \label{eq:rho}
          \rho_k= 
          \begin{cases} 
          \rho_{k-1}, \quad &\text{if } \rho_{k-1} \geq \|\lambda_{k+1}\|_\infty +\sigma, \\
           2\|\lambda_{k+1}\|_\infty +\sigma, \quad &\text{otherwise}, 
           \end{cases}
\end{equation}
where $\sigma > 0$ is a given constant.

The resulting line-search SQP algorithm with $\ell_1$-merit function is summarized in Algorithm~\ref{alg:exact}.

\begin{algorithm}[!htb]
	\caption{Line-Search SQP Algorithm} \label{alg:exact}
	\begin{algorithmic}[1]    
    \REQUIRE $c_1\in(0,1)$, $x_0$, $\rho_0$, $\sigma>0$, tolerances $\mbox{tol}_f,\mbox{tol}_c>0$
    \ENSURE Point $x_K$ where $\| \nabla f(x_K) - c'(x_K)^T\lambda_K\| < \mbox{tol}_f$, $\| c(x_K) \|_1 < \mbox{tol}_c$.
    \FOR{$k=0,1,2,\dots$}
            \STATE {\bf if} $\| \nabla f(x_k) - c'(x_k)^T\lambda_k\| < \mbox{tol}_f$ and $\| c(x_k) \|_1 < \mbox{tol}_c$ {\bf then} stop.
            \STATE Find $s_k$, $\lambda_{k+1}$ by solving \eqref{eq:exact-kkt}.
	    \STATE Compute $\rho_k$ using \eqref{eq:rho}.
	     \STATE Use backtracking line-search to find $\alpha_k$ that satisfies the sufficient decrease condition \eqref{eq:exact-merit-sd}.
              \STATE Set $x_{k+1}= x_k+\alpha_k s_k$.
     \ENDFOR
	\end{algorithmic}
\end{algorithm}

%%%%%%%%%%%%%%%%%%%%%%%%%%%%%%%%%%%%%%%%%%%%%%%%%%%%%%%%%%%%%%%%%%%%%%%%%
\subsection{Convergence} \label{sec:exact-merit-convergence}
We review the main ingredients of the convergence analysis. 

\begin{assumptions} \label{ass:exact}
	\begin{enumerate}[i)]
    \item The sequence $\{f(x_k)\}_{k=0}^\infty$ is bounded from below.
    \item The sequences $\{\|c(x_k)\|_1\}_{k=0}^\infty, \{\|c'(x_k)\|_1\}_{k=0}^\infty, \{\|\nabla f(x_k)\|_1\}_{k=0}^\infty $ are bounded.
    \item There exists $0 < h_l < h_u $ such that $h_l \| v \|_2 \le v^T H_k v  \le  h_u \| v \|_2$ for all $v \in \real^n$ and for all $k$. 
   \item The singular values of $c'(x_k)$ are uniformly bounded: $0 < \sigma_{\min} \leq \sigma_{\min}(c'(x_k))\leq \sigma_{\max}(c'(x_k)) \leq \sigma_{\max}$.
    \item The gradients $\nabla f$ and $\nabla c_i$, $i = 1, \ldots, m$,  are Lipschitz continuous with constants $L^f$ and
            $L^{c_i}$, $i = 1, \ldots, m$. The vector of Lipschitz  constants is $L^c=\big[ L^{c_1},  \ldots,  L^{c_m} \big]^T$.
	\end{enumerate}
\end{assumptions}
Note that  Assumption~\ref{ass:exact} means that $ \{\|H_k\|_2\}_{k=0}^\infty$ is bounded.

The following two lemmas establish the existence of an acceptable step size as well as a lower bound on accepted step sizes.

\begin{lemma} \label{lem:exact-alpha-upper}
  If $s_k$ and $\rho_k$ are chosen as in Algorithm~\ref{alg:exact} and $s_k^T H_k s_k \ge 0$,
  then there exists $\bar\alpha$ such that the sufficient decrease condition \eqref{eq:exact-merit-sd} is satisfied for $\alpha\in(0,\bar\alpha)$.
\end{lemma}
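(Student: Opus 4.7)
The plan is to show that $s_k$ is a descent direction for the merit function at $x_k$ and then apply the standard Armijo argument, with one extra step needed to handle the nonsmooth $\ell_1$ term.

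First, I would verify that $D(\phi(x_k;\rho_k); s_k) \le 0$ using the bound \eqref{eq:exact-dd-ineq} together with the update rule \eqref{eq:rho}. By construction of $\rho_k$, we have $\rho_k - \|\lambda_{k+1}\|_\infty \ge \sigma > 0$, and by the hypothesis $s_k^T H_k s_k \ge 0$, inequality \eqref{eq:exact-dd-ineq} gives
\[
  D(\phi(x_k;\rho_k); s_k) \le -s_k^T H_k s_k - \sigma \|c(x_k)\|_1 \le 0,
\]
with strict inequality whenever $s_k \ne 0$ and $(x_k,\lambda_{k+1})$ is not already a KKT point (the trivial case $s_k=0$ makes \eqref{eq:exact-merit-sd} hold with equality for every $\alpha$).

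Next, I would expand $\phi(x_k+\alpha s_k;\rho_k)$ to first order in $\alpha$. For the smooth term, Taylor's theorem gives $f(x_k+\alpha s_k) = f(x_k) + \alpha \nabla f(x_k)^T s_k + O(\alpha^2)$. The key manipulation for the nonsmooth term is to use the linearized feasibility condition $c'(x_k) s_k = -c(x_k)$ from \eqref{eq:s-prob-b}, which yields
\[
  c(x_k+\alpha s_k) = c(x_k) + \alpha c'(x_k) s_k + r(\alpha) = (1-\alpha)\, c(x_k) + r(\alpha),
\]
where $\|r(\alpha)\|_1 = O(\alpha^2)$ (componentwise Taylor with remainder). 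For $\alpha \in (0,1]$, the triangle inequality then gives $\|c(x_k+\alpha s_k)\|_1 \le (1-\alpha)\|c(x_k)\|_1 + O(\alpha^2)$. Combining with the expansion of $f$ and using the formula \eqref{eq:exact-dd} for $D(\phi(x_k;\rho_k); s_k)$ produces
\[
  \phi(x_k+\alpha s_k;\rho_k) \le \phi(x_k;\rho_k) + \alpha\, D(\phi(x_k;\rho_k); s_k) + C \alpha^2
\]
for some constant $C \ge 0$ depending on second-order remainders of $f$ and the components of $c$ on a neighborhood of $x_k$.

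Finally, I would subtract $\phi(x_k;\rho_k) + c_1 \alpha D(\phi(x_k;\rho_k); s_k)$ from both sides to see that \eqref{eq:exact-merit-sd} holds as soon as
\[
  (1-c_1)\, \alpha\, D(\phi(x_k;\rho_k); s_k) + C\alpha^2 \le 0,
\]
i.e., whenever $\alpha \le \bar\alpha \deq -(1-c_1) D(\phi(x_k;\rho_k); s_k)/C$ (and $\bar\alpha \le 1$). Since $D \le -\sigma \|c(x_k)\|_1 - s_k^T H_k s_k < 0$ in the nontrivial case, $\bar\alpha > 0$. The only real obstacle is bounding the remainder in $\|c(x_k+\alpha s_k)\|_1$ without losing sign control; this is exactly where the identity $c(x_k+\alpha s_k) = (1-\alpha)c(x_k) + r(\alpha)$ does the work by isolating the linear reduction of the constraint residual from the higher-order remainder, after which the argument is a standard Armijo expansion.
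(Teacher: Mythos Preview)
Your proposal is correct and follows the same approach as the paper: establish that $s_k$ is a descent direction via \eqref{eq:exact-dd-ineq} and \eqref{eq:rho}, then invoke the standard Armijo argument. The paper's proof is only two lines, delegating the second step to \cite[Sec.~3.1]{JNocedal_SJWright_2006a}, whereas you spell out the nonsmooth $\ell_1$ expansion explicitly using the identity $c(x_k+\alpha s_k)=(1-\alpha)c(x_k)+r(\alpha)$; this is exactly the content behind that citation and is a fine way to handle the directional-derivative step for the merit function.
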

\begin{proof}
  From \eqref{eq:exact-dd-ineq} and \eqref{eq:rho}, $s_k$ is a descent direction.
  Then apply standard arguments, see, e.g., \cite[Section 3.1]{JNocedal_SJWright_2006a}.
\end{proof}

\begin{lemma} \label{lem:exact-alpha-lower}
 Let  Assumptions~\ref{ass:exact} (iii) and (v).
 If $\alpha_k$ is selected using backtracking line-search with $0<c_3\leq\alpha_k^{(0)}\leq1$,
  then there exists $\underline{\alpha}(\rho_k) > 0$ such that $\alpha_k\geq\underline{\alpha}(\rho_k)$ for all $k$.
  If $\rho_k \le \overline{\rho}$, then $\underline{\alpha}(\rho_k) \geq\underline{\alpha}(\overline{\rho}) > 0$.
\end{lemma}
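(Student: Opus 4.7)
The plan is to upper-bound $\phi(x_k+\alpha s_k;\rho_k)-\phi(x_k;\rho_k)$ by a quadratic in $\alpha$ using the Lipschitz continuity of the gradients from Assumption~\ref{ass:exact}(v), compare this with the sufficient-decrease target \eqref{eq:exact-merit-sd}, and then translate the resulting threshold on $\alpha$ into a uniform lower bound on the accepted step via the backtracking contraction factor $\beta_1$.

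First I would estimate each piece of the merit function separately. For the objective, the standard Lipschitz bound gives $f(x_k+\alpha s_k)\le f(x_k)+\alpha\nabla f(x_k)^Ts_k+\tfrac{L^f}{2}\alpha^2\|s_k\|_2^2$. For the constraints, I apply the same Lipschitz estimate componentwise and combine with the linearization identity $c(x_k)+\alpha c'(x_k)s_k=(1-\alpha)c(x_k)$, which holds for $\alpha\in[0,1]$ thanks to \eqref{eq:s-prob-b}. This yields $\|c(x_k+\alpha s_k)\|_1\le(1-\alpha)\|c(x_k)\|_1+\tfrac{\|L^c\|_1}{2}\alpha^2\|s_k\|_2^2$. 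Adding the two estimates with weight $\rho_k$ on the constraint part and recognizing the linear-in-$\alpha$ term as the directional derivative via \eqref{eq:exact-dd} produces
\[
\phi(x_k+\alpha s_k;\rho_k)-\phi(x_k;\rho_k)\le\alpha D(\phi(x_k;\rho_k);s_k)+\tfrac{\alpha^2}{2}\bigl(L^f+\rho_k\|L^c\|_1\bigr)\|s_k\|_2^2.
\]

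Next I would bound the directional derivative from below in magnitude using \eqref{eq:exact-dd-ineq}. Because $\rho_k\ge\|\lambda_{k+1}\|_\infty$ by \eqref{eq:rho} and $s_k^TH_ks_k\ge h_l\|s_k\|_2^2$ by Assumption~\ref{ass:exact}(iii), one obtains $-D(\phi(x_k;\rho_k);s_k)\ge h_l\|s_k\|_2^2$. Plugging this into the quadratic bound above and comparing with \eqref{eq:exact-merit-sd} shows that any
\[
\alpha\le\bar\alpha(\rho_k):=\frac{2(1-c_1)h_l}{L^f+\rho_k\|L^c\|_1}
\]
satisfies the sufficient-decrease condition. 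Note that $\bar\alpha(\rho_k)$ is monotone decreasing in $\rho_k$, so $\rho_k\le\overline{\rho}$ yields $\bar\alpha(\rho_k)\ge\bar\alpha(\overline{\rho})$.

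Finally, to convert this threshold into a lower bound on the step returned by backtracking, I split into two cases. If $\alpha_k^{(0)}$ already satisfies \eqref{eq:exact-merit-sd}, then $\alpha_k\ge c_3$. Otherwise, letting $i\ge1$ be the index of the first accepted trial, $\alpha_k^{(i-1)}$ failed \eqref{eq:exact-merit-sd} and hence the contrapositive of the sufficient condition above gives $\alpha_k^{(i-1)}>\bar\alpha(\rho_k)$; the contraction rule then forces $\alpha_k=\alpha_k^{(i)}\ge\beta_1\alpha_k^{(i-1)}>\beta_1\bar\alpha(\rho_k)$. Combining both cases yields $\alpha_k\ge\underline{\alpha}(\rho_k):=\min\{c_3,\beta_1\bar\alpha(\rho_k)\}$, and monotonicity in $\rho_k$ supplies the final assertion.

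The main obstacle is the bookkeeping that chains the linearization identity for the constraints with the componentwise $\ell_1$ remainder so that the effective Lipschitz modulus emerges as exactly $L^f+\rho_k\|L^c\|_1$, matching the penalty weighting inside the merit function; once this is in place, the remaining step is the standard backtracking lower-bound calculation.
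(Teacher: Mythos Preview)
Your argument is correct and is the standard one: the quadratic overestimate via Lipschitz continuity, the lower bound $-D(\phi(x_k;\rho_k);s_k)\ge h_l\|s_k\|_2^2$ from \eqref{eq:exact-dd-ineq} and Assumption~\ref{ass:exact}(iii), and the backtracking contrapositive all combine exactly as you describe. The paper itself does not give a proof here but defers to \cite[Lemma~5.2.3]{DSGrundvig_2025a}; your derivation is precisely the kind of argument one expects that reference to contain, so there is nothing substantive to contrast.

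Two minor points worth tightening in a final write-up: first, the constraint estimate $\|c(x_k+\alpha s_k)\|_1\le(1-\alpha)\|c(x_k)\|_1+\tfrac{\|L^c\|_1}{2}\alpha^2\|s_k\|_2^2$ uses $1-\alpha\ge0$, which is guaranteed because $\alpha_k^{(0)}\le1$ and backtracking only decreases the trial step---you note this but it is worth making the dependence explicit. Second, the division by $\|s_k\|_2^2$ tacitly assumes $s_k\neq0$; this is harmless since $s_k=0$ corresponds to a KKT point and the algorithm would have stopped, but a sentence to that effect removes any ambiguity.
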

\begin{proof}
      See  \cite[Lemma~5.2.3]{DSGrundvig_2025a}.
\end{proof}

It can also be shown that the penalty parameters $\rho_k$ will eventually be fixed.
\begin{lemma} \label{lem:exact-rho-bound}
  If Assumptions~\ref{ass:exact} (ii)-(iv) hold, then the steps $s_k$ and 
  the Lagrange multiplier estimates $\lambda_{k+1}$ are uniformly bounded, and
  there exists $k_0$ such that $\rho_{k+1}=\rho_k=:\bar\rho$ for all $k\geq k_0$.
\end{lemma}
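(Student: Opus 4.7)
The plan is to use the KKT system \eqref{eq:exact-kkt} to extract explicit bounds on $s_k$ and $\lambda_{k+1}$ in terms of quantities controlled by Assumptions~\ref{ass:exact}(ii)--(iv), and then to exploit the fact that the penalty update \eqref{eq:rho} produces a non-decreasing sequence $\{\rho_k\}$ together with the resulting uniform bound on $\|\lambda_{k+1}\|_\infty$ to conclude that $\rho_k$ can change only finitely many times.

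For $s_k$, I would use the null-space decomposition $s_k = s_k^c + P_k s_k^0$ introduced in Section~\ref{sec:exact-merit-step}. The minimum-norm particular solution \eqref{eq:s-prob-project-s0} satisfies $\|s_k^c\|_2 \le (\sigma_{\max}/\sigma_{\min}^2)\|c(x_k)\|_2$ by Assumption~\ref{ass:exact}(iv), which is uniformly bounded by Assumption~\ref{ass:exact}(ii). For the tangential piece, the reduced problem \eqref{eq:s-prob-red} reads $P_k H_k P_k s_k^0 = -P_k(\nabla f(x_k) + H_k s_k^c)$; Assumption~\ref{ass:exact}(iii) yields a uniform positive lower bound $h_l$ on the eigenvalues of $P_k H_k P_k$ restricted to $\cN(c'(x_k))$ and the norm bound $\|H_k\|_2 \le h_u$, so that $\|P_k s_k^0\|_2 \le (1/h_l)\|P_k(\nabla f(x_k) + H_k s_k^c)\|_2$, whose right-hand side is uniformly bounded by Assumption~\ref{ass:exact}(ii) and the bound on $s_k^c$. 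This gives a uniform bound on $\|s_k\|_2$. The closed-form expression $\lambda_{k+1} = (c'(x_k)c'(x_k)^T)^{-1} c'(x_k)(\nabla f(x_k) + H_k s_k)$ derived at the end of Section~\ref{sec:exact-merit-step}, together with Assumptions~\ref{ass:exact}(ii)--(iv) and the bound on $\|s_k\|_2$, then yields a uniform bound $\|\lambda_{k+1}\|_\infty \le \lambda_{\max}$.

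For the stabilization of $\rho_k$, I would argue by contradiction: suppose updates of $\rho$ occur along an infinite subsequence $k_1 < k_2 < \cdots$. Between successive updates $\rho$ is constant, and at $k_{j+1}$ the ``otherwise'' branch triggers only when $\rho_{k_j} < \|\lambda_{k_{j+1}+1}\|_\infty + \sigma$; combined with the identity $\rho_{k_j} = 2\|\lambda_{k_j+1}\|_\infty + \sigma$ this forces $\|\lambda_{k_{j+1}+1}\|_\infty > 2\|\lambda_{k_j+1}\|_\infty$. After the first update, $\rho_{k_1} \ge \sigma > 0$, so any subsequent update forces $\|\lambda_{k_j+1}\|_\infty > 0$ and hence a strict more-than-doubling along the subsequence, contradicting the uniform bound $\lambda_{\max}$. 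Therefore there exists $k_0$ such that $\rho_k = \rho_{k_0} =: \bar\rho$ for all $k \ge k_0$.

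The main obstacle is the first step: getting a uniform-in-$k$ spectral bound on $P_k H_k P_k$ restricted to the changing subspace $\cN(c'(x_k))$. The bound is immediate because $P_k$ is the orthogonal projector onto that subspace and Assumption~\ref{ass:exact}(iii) controls all of $H_k$, but the bookkeeping across iterations (with $P_k$ and $\cN(c'(x_k))$ moving with $k$) should be written carefully. Once these estimates are in hand, the penalty-stabilization step is essentially combinatorial and uses only the uniform bound on $\|\lambda_{k+1}\|_\infty$.
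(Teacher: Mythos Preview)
Your argument is correct and follows the standard route one expects here: bound $s_k$ via the null-space decomposition using Assumptions~\ref{ass:exact}(ii)--(iv), bound $\lambda_{k+1}$ from the least-squares formula, and then observe that the update rule \eqref{eq:rho} forces geometric growth of $\|\lambda_{k_j+1}\|_\infty$ along any infinite subsequence of updates, contradicting the uniform bound. The paper does not give its own proof (it defers to \cite[Lemma~5.2.4]{DSGrundvig_2025a}), but your approach is precisely the natural one and is almost certainly what appears there; your minor overestimate $\|s_k^c\|_2\le(\sigma_{\max}/\sigma_{\min}^2)\|c(x_k)\|_2$ (the sharp constant is $1/\sigma_{\min}$) is harmless.
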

\begin{proof}
    See   \cite[Lemma~5.2.4]{DSGrundvig_2025a}.
\end{proof}

The previous lemmas can now be used to prove a convergence result for the SQP Algorithm~\ref{alg:exact}.
\begin{theorem} \label{thm:exact-convergence}
  If Assumptions~\ref{ass:exact} hold, then the iterates of the line-search SQP Algorithm~\ref{alg:exact}
  satisfy 
  \[
    \lim_{k\rightarrow\infty} \|\nabla f(x_k) - c'(x_k)^T\lambda_{k+1} \| =0
    \quad \mbox{ and } \quad 
    \lim_{k\rightarrow\infty} \|c(x_k)\|_1 =0,
  \]
  which correspond to the measures of first-order criticality as defined by \eqref{eq:exact-fonc}.
\end{theorem}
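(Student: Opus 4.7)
The plan is to combine the preceding three lemmas with a telescoping argument on the merit function. First I would invoke Lemma~\ref{lem:exact-rho-bound} to fix the penalty parameter: there exists $k_0$ and $\bar\rho$ such that $\rho_k = \bar\rho$ for all $k \ge k_0$, and the multiplier estimates $\lambda_{k+1}$ are uniformly bounded. By the update rule \eqref{eq:rho}, this also means that $\bar\rho - \|\lambda_{k+1}\|_\infty \ge \sigma$ for all $k\ge k_0$. Combining this with the directional-derivative bound \eqref{eq:exact-dd-ineq} and the uniform positive-definiteness of $H_k$ from Assumption~\ref{ass:exact}(iii), I would obtain
\begin{equation*}
  D(\phi(x_k;\bar\rho);s_k) \;\le\; -h_l\|s_k\|_2^2 \;-\; \sigma\|c(x_k)\|_1 \qquad \text{for all } k\ge k_0.
\end{equation*}

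Next I would use that the merit function is bounded below along the sequence: Assumption~\ref{ass:exact}(i) gives $f(x_k) \ge \underline f$, and since $\bar\rho \|c(x_k)\|_1 \ge 0$, we have $\phi(x_k;\bar\rho)\ge \underline f$. The sufficient decrease condition \eqref{eq:exact-merit-sd} together with Lemma~\ref{lem:exact-alpha-lower} (whose lower bound is uniform because $\rho_k\le\bar\rho$) yields, for all $k\ge k_0$,
\begin{equation*}
  \phi(x_k;\bar\rho) - \phi(x_{k+1};\bar\rho) \;\ge\; c_1 \underline\alpha(\bar\rho)\bigl( h_l\|s_k\|_2^2 + \sigma \|c(x_k)\|_1 \bigr).
\end{equation*}
Summing this telescoping inequality from $k_0$ to $\infty$ and using boundedness below of $\phi$, I conclude that the series on the right converges, which forces $\|s_k\|_2 \to 0$ and $\|c(x_k)\|_1 \to 0$.

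Finally, to translate $\|s_k\|_2 \to 0$ into the dual stationarity limit, I would use the first block of the KKT system \eqref{eq:exact-kkt}, namely $\nabla f(x_k) - c'(x_k)^T\lambda_{k+1} = -H_k s_k$. Since Assumption~\ref{ass:exact}(iii) implies $\|H_k\|_2 \le h_u$, we get $\|\nabla f(x_k) - c'(x_k)^T\lambda_{k+1}\| \le h_u \|s_k\|_2 \to 0$, which closes the argument.

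The main obstacle, as usual for $\ell_1$-merit SQP, is controlling that the step-size does not shrink to zero while also making sure the penalty parameter stabilizes; but both of these are handled cleanly by the cited Lemmas~\ref{lem:exact-alpha-lower} and \ref{lem:exact-rho-bound}, so the proof reduces to the standard telescoping-sum bookkeeping sketched above. The one point requiring a little care is ensuring that the eventual lower bound $\bar\rho - \|\lambda_{k+1}\|_\infty \ge \sigma$ really holds (not merely $>0$), which is enforced by the doubling rule in \eqref{eq:rho}.
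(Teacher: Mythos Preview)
Your proof is correct and follows essentially the same route as the paper's: invoke Lemma~\ref{lem:exact-rho-bound} to freeze $\rho_k=\bar\rho$, telescope the sufficient-decrease inequality \eqref{eq:exact-merit-sd}, use the lower bound on $\alpha_k$ from Lemma~\ref{lem:exact-alpha-lower} together with \eqref{eq:exact-dd-ineq} to force $s_k\to 0$ and $\|c(x_k)\|_1\to 0$, and then read off dual stationarity from the first block of \eqref{eq:exact-kkt}. The only cosmetic difference is that the paper first shows $-D(\phi(x_k;\bar\rho);s_k)\to 0$ and then splits it into the $s_k^TH_ks_k$ and $\|c(x_k)\|_1$ pieces, whereas you insert the bound $D\le -h_l\|s_k\|_2^2-\sigma\|c(x_k)\|_1$ before summing; both orderings are equivalent.
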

\begin{proof}
  The proof is standard, but included to provide context for the following algorithmic developments.
  
  By Lemma~\ref{lem:exact-rho-bound}, for any $K> k_0$,
  \[
    \phi(x_K;\bar\rho) - \phi(x_{k_0};\bar\rho) = \sum_{k=k_0}^{K-1} \phi(x_{k+1};\bar\rho) - \phi(x_k;\bar\rho) 
    \leq \sum_{k=k_0}^{K-1} c_1\alpha_k D(\phi(x_k;\bar\rho);s_k).
  \]
  Assumption~\ref{ass:exact} (i) implies the boundedness of the left hand side so
  $ \infty > \sum_{k=k_0}^{K-1} -c_1\alpha_k D(\phi(x_k;\bar\rho);s_k)$.
  By Lemma~\ref{lem:exact-alpha-lower}, $\alpha_k$ is bounded from below which implies that $-D(\phi(x_k;\bar\rho);s_k) \rightarrow0$ as $k\rightarrow \infty$.
  This limit, along with \eqref{eq:exact-dd-ineq} and \eqref{eq:rho} implies that as $k\rightarrow\infty$
  \begin{equation} \label{eq:limit}
    -D(\phi(x_k;\bar\rho);s_k) \geq s_k^TH_ks_k + (\bar\rho -\|\lambda_{k+1}\|_\infty )\|c(x_k)\|_1 \geq s_k^TH_ks_k \rightarrow 0.
  \end{equation}
  Since $-s_k^TH_ks_k\rightarrow0$, \eqref{eq:limit} implies that $(\bar\rho -\|\lambda_{k+1}\|_\infty)\|c(x_k)\|_1\rightarrow0$.
  Noting that $\bar\rho>\|\lambda_{k+1}\|_\infty+\sigma/2$ means that $\|c(x_k)\|_1\rightarrow0$ as in \eqref{eq:exact-fonc}.
  Applying Assumption~\ref{ass:exact}(iii) to $-s_k^TH_ks_k\rightarrow0$ implies that $s_k\rightarrow0$.
  Finally $s_k\rightarrow0$, along with the first line of \eqref{eq:exact-kkt} implies that 
  $ \nabla f(x_k) - c'(x_k)^T\lambda_{k+1} \rightarrow0$ as desired.
\end{proof}

%!TEX root = main.tex
%%%%%%%%%%%%%%%%%%%%%%%%%%%%%%%%%%%%%%%%%%%%%%%%%%%%%%%%%%%%%%%%%%%%%%%%%
\section{$\ell_1$-Merit Function SQP with Inexact Function Information} \label{sec:inexact-merit}
Now, assume that around an iterate $x_k$, we can compute smooth models $m_k: \real^n\rightarrow\real$ of the objective and 
$h_k:\real^n\rightarrow\real^m$ of the constraints.
Furthermore, we assume that these models and their derivatives are computationally inexpensive to evaluate,
and we will use these models instead of the objective and constraint functions to compute a new iterate.
Finally, we assume that objective function and constraint function models have  error bounds
\begin{equation} \label{eq:obj-error}
  |f(x) -m_k(x)| \leq M_f e_k^f(x)  \quad \mbox{ and } \quad 
  \|c(x) -h_k(x)\|_1 \leq M_c e_k^c(x),
\end{equation}
where $M_f, M_c$ are constants, and $e_k^f$ and $e_k^c$ are computable error functions.
The reason to separate the constants  $M_f$, $ M_c$ from the functions $e_k^f$, $e_k^c$
is that often one can establish error bounds of the type \eqref{eq:obj-error} with constants 
$M_f$, $ M_c$ that dependent on bounds of inverse matrices and Lipschitz constants that can 
be proven to exist, but are difficult to compute. Our algorithm requires the existence of 
constants $M_f$, $ M_c$, but does not require their values. 
The error functions $e_k^f$ and $e_k^c$ are assumed to be computable, but they they may be large
for $x$ away from the current iterate $x_k$ at which the current models $m_k$ are $h_k$ are built,
and could even take the value $\infty$ away from the current iterate $x_k$.
However, we assume that given tolerances $\tau_k^f$ and $\tau_k^c$, that will be determined
by our algorithm, approximations $m_k$ and $h_k$ can be computed such that the error bounds at $x_k$ satisfy
\begin{equation} \label{eq:ebound-tau}
	e_k^f(x_k) \le \tau^f_k  \quad \mbox{ and } \quad 
	e_k^c(x_k) \le \tau^c_k.
\end{equation}
In addition, relative errors $\tau_k^{f,g}$ and $\tau_k^{c,g}$  on the derivatives will be described below.
The tolerances $\tau^f_k$, $\tau^c_k$, $\tau_k^{f,g}$ and $\tau_k^{c,g}$ will ultimately be specified by our optimization algorithm.

%%%%%%%%%%%%%%%%%%%%%%%%%%%%%%%%%%%%%%%%%%%%%%%%%%%%%%%%%%%%%%%%%%%%%%%%%
\subsection{Basic Algorithm} \label{sec:inexact-merit-basic}
For now, assume that at step $k$ of an algorithm to solve \eqref{eq:exact-nlp} models $m_k$ and $h_k$ of the objective and constraints 
around the current iterate $x_k$ have been given.
Later, specific conditions for constructing such models will be provided.
If the models $m_k$ and $h_k$ are sufficiently accurate in a region around the current iterate $x_k$, 
it is reasonable to believe that an approximate solution of 
\begin{subequations} \label{eq:inexact-nlp}
\begin{align}
  \min \quad & m_k(x), \\
  \text{s.t.}\quad & h_k(x)=0,
\end{align}
\end{subequations}
provides a better approximation of the solution of the exact problem \eqref{eq:exact-nlp} than the current iterate $x_k$.

The new iterate $x_{k+1}$ is computed as an approximate solution of \eqref{eq:inexact-nlp} in two steps.
Each step is related to the SQP method with $\ell_1$-merit function applied to \eqref{eq:inexact-nlp}. 
Each subproblem \eqref{eq:inexact-nlp} has its own merit function defined as
\begin{equation} \label{eq:nexact-merit}
  \psi_k(x;\rho)= m_k(x) + \rho \| h_k(x)\|_1.
\end{equation}
The objective and constraint error bounds \eqref{eq:obj-error} imply the following  error bound for the merit function
\begin{align}     \label{eq:merit-error}
  |\phi(x;\rho) - \psi_k(x;\rho)| &=\Big|f(x)-m_k(x) + \rho(\|c(x)\|_1 - \|h_k(x)\|_1)\Big| \nonumber \\
  &\leq   \max\{ M_f, M_c \} \big( e_k^f(x) + \rho e_k^c(x)  \big)   =: \max\{ M_f, M_c \} e_k(x;\rho). 
  \end{align}
Note that the merit function error bound $e_k(x;\rho)$ is parameterized by the penalty parameter $\rho$.

Throughout the rest of the section, the subscript $k$ will denote the outer iterations which track the progress of the algorithm to solve \eqref{eq:exact-nlp}.
The double subscript $k,j$ will denote the $j$th iteration of the iterative algorithm used to compute an approximate solution of the $k$th 
subproblem  \eqref{eq:inexact-nlp}.

In the first step towards computing the new outer iterate $x_{k+1}$, one step of the SQP method with initial iterate $x_{k,0} = x_k$ 
is applied to \eqref{eq:inexact-nlp}. If $H_{k,0}$ is a symmetric positive definite matrix that replaces the Hessian of the model Lagrangian 
\begin{equation}
  \mathcal {L}_k(x,\lambda) = m_k(x) - \lambda^Th_k(x)
\end{equation}
and the Jacobian $h_k'(x_k) \in \real^{m \times n}$ has rank $m$, then the optimality conditions corresponding to
the SQP subproblem for \eqref{eq:inexact-nlp} are given by
\begin{equation} \label{eq:nexact-kkt}
  \begin{bmatrix} H_{k,0} & -h'_k(x_k)^T\\ h'_k(x_k) & 0\end{bmatrix}
  \begin{bmatrix} s_{k,0} \\ \lambda_{k+1,0}\end{bmatrix} 
 =  -\begin{bmatrix} \nabla m_k(x_k) \\ h_k(x_k)\end{bmatrix}.
\end{equation}
As in the exact case, \eqref{eq:nexact-kkt} can be solved using an unconstrained problem, see \eqref{eq:s-prob-red} and the discussion in Section~\ref{sec:exact-merit-step}.

Continuing to follow the SQP method described in Section~\ref{sec:exact-merit}, the penalty parameter 
\begin{equation} \label{eq:nexact-merit-penalty}
    \rho_k=\rho_{k,0}
\end{equation}
is set using \eqref{eq:rho}.
The backtracking line-search is applied to the merit function \eqref{eq:nexact-merit} to compute 
a step-size $\alpha_{k,0}$ is chosen such that the sufficient decrease condition
\begin{equation} \label{eq:nexact-merit-sd}
  \psi_k(x_k +\alpha_{k,0}s_{k,0};\rho_{k,0})\leq \psi_k(x_k ;\rho_{k,0}) + c_1\alpha_{k,0} D(\psi_k(x_k ;\rho_{k,0}); s_{k,0})
\end{equation}
is satisfied.
The subiterate
\begin{equation} \label{eq:nexact-merit-xkC}
        x_k^C=x_k+\alpha_{k,0}s_{k,0}
\end{equation}
is referred to as a generalized Cauchy point.  
With some assumptions on the models $m_k$ and $h_k$ which mirror Assumptions~\ref{ass:exact}, 
the existence of such a $x_k^C$ follows easily from the results in Section~\ref{sec:exact-merit-convergence}.
We will make this precise at the end of this subsection.
This subiterate, together with the sufficient decrease condition
\eqref{eq:nexact-merit-sd}, will ensure the convergence of our algorithm.

In theory, it is possible to use $x_{k+1} = x_k^C$. However, in our applications, the generation of models $m_k$ and $h_k$
is expensive and we want to utilize these models more. Therefore, we continue to use these models and additional SQP iterations
applied to \eqref{eq:inexact-nlp} to compute $x_{k+1}$. The computation of $x_{k+1}$ given  $x_k^C$ is referred to as the second step.
We require that given $a_1\in(0,1]$, the new outer iterate $x_{k+1}$ satisfies
\begin{equation} \label{eq:xkC-cond}
  \psi_k(x_k;\rho_k) - \psi_{k+1}(x_{k+1};\rho_k) \ge a_1\left( \psi_k(x_k;\rho_k) -\psi_k(x^C_k;\rho_k) \right).
\end{equation}
Computing $x_{k+1}$ that satisfies the ``ideal'' decrease condition \eqref{eq:xkC-cond} is non-trivial, because it involves the merit function $\psi_{k+1}$
computed with the new models $m_{k+1}$  and  $h_{k+1}$, and the generation of these models has a computational cost.
Therefore, even checking whether a given $x_{k+1}$ satisfies \eqref{eq:xkC-cond} is computationally costly.
We will discuss a practical way to compute $x_{k+1}$ that satisfies \eqref{eq:xkC-cond} in Section~\ref{sec:inexact-algo-practical}.
The basic line-search SQP Algorithm \ref{alg:inexact-basic} with tunable models $m_k$ and $h_k$ summarizes our approach.

\begin{algorithm}[htp]
	\caption{Basic Line-Search SQP Algorithm} \label{alg:inexact-basic}
	\begin{algorithmic}[1]    
    \REQUIRE $c_1\in(0,1)$, $x_0$, $m_k$, $\tau_g,\tau_J,\tau_c \in (0,1)$, $\rho_0,\sigma>0$, tolerances $\mbox{tol}_f,\mbox{tol}_c>0$
    \ENSURE Point $x_K$ where $\| \nabla m_K(x_K) - h_K'(x_K)^T\lambda_K\| < \mbox{tol}_f$, $\| h_K(x_K) \|_1 < \mbox{tol}_c$.

    \FOR{$k=0,1,2,\dots$}
      \STATE Compute models $m_k$ and $h_k$.
      \STATE {\bf if} $\| \nabla m_k(x_k) - h_k'(x_k)^T\lambda_k\| < \mbox{tol}_f$ and $\| h_k(x_k) \|_1 < \mbox{tol}_c$, {\bf then} stop.
      \STATE Find $s_k = s_{k,0}$, $\lambda_{k+1,0}$ by solving \eqref{eq:nexact-kkt}.
      \STATE Compute $\rho_{k}$ using \eqref{eq:rho} with $\lambda_{k+1}$ replaced by $\lambda_{k+1,0}$.
      \STATE Use a backtracking line-search algorithm to find $\alpha_k = \alpha_{k,0}$ that satisfies sufficient decrease \eqref{eq:nexact-merit-sd}.
      \STATE Set generalized Cauchy point $x^C_k= x_k+\alpha_k s_k$.
      \STATE Compute $x_{k+1}$ such that \eqref{eq:xkC-cond} is satisfied.
	\ENDFOR
	\end{algorithmic}
\end{algorithm}

To guarantee the existence of  $x_k^C$ that satisfies the sufficient decrease condition \eqref{eq:nexact-merit-sd} 
and to ensure convergence of Algorithm~\ref{alg:inexact-basic} we need the following assumptions, which mirror Assumptions~\ref{ass:exact}.

\begin{assumptions} \label{ass:inexact}
	\begin{enumerate}[i)]
    \item The sequence $\{m_k(x_k)\}_{k=0}^\infty$ is bounded from below.
    \item The sequences $\{\|h_k(x_k)\|_1\}_{k=0}^\infty$, $\{\|h'_k(x_k)\|_1\}_{k=0}^\infty$, $\{\|\nabla m_k(x_k)\|_1\}_{k=0}^\infty$ are bounded.
    \item There exists $0 < h_l <h_u$ such that
             $h_l \| v \|_2 \le v^T H_{k,0} v \leq h_u\|v\|_2$  for all $v \in \real^n$ and all $k$.
    \item The singular values of $h'_k(x_k)$ are uniformaly bounded: $0 < \sigma_{\min}  \leq \sigma_{\min}(h'_k(x_k))\leq \sigma_{\max}(h'_k(x_k)) \leq \sigma_{\max}$.
    \item For every $k$,  the gradients $\nabla m_k$ and $\nabla (h_k)_i$, $i = 1, \ldots, m$,  are Lipschitz continuous with constants $L_k^f$ and
            $L_k^{h_i}$, $i = 1, \ldots, m$, and  Lipschitz constants $L_k^f$ and $L_k^h=\begin{bmatrix} L_k^{h_1} & \cdots & L_k^{h_m} \end{bmatrix}^T$
            are uniformly bounded from above,  $L_k^f\leq C_f$ and $\|L_k^h\|_1\leq C_h$ for all $k$.
	\end{enumerate}
\end{assumptions}

The Cauchy point and penalty parameter are selected using one step of Algorithm~\ref{alg:exact} applied with
$f$ and $c$ replaced  by $m_k$ and $h_k$.
Thus,  Lemma~\ref{lem:exact-alpha-upper} with $f$ and $c$ replaced by $m_k$ and $h_k$ implies that the sufficient decrease condition \eqref{eq:nexact-merit-sd} is satisfiable.
Since a backtracking line-search is used to select $\alpha_{k,0}$, then if Assumptions~\ref{ass:inexact} (iii) and (v) hold,
Lemma~\ref{lem:exact-alpha-lower} with $f$ and $c$ replaced  by $m_k$ and $h_k$ giving a uniform lower bound on $\alpha_{k,0}$.
Additionally, if Assumptions~\ref{ass:inexact} (ii)-(iv) hold then Lemma~\ref{lem:exact-rho-bound} with $f$ and $c$ replaced by $m_k$ and $h_k$
implies that there exists $k_0$ such that $\rho_k=\bar\rho$ for $k\geq k_0$.

The following convergence result requires some relative error conditions.
In particular, given positive constants $\tau^{f,g},\tau^{c,g},\tau^c \in (0,1)$, we assume
relative error bounds on the derivatives of the objective and constraints,
\begin{align} \label{eq:rel-error1}
  \frac{\|\nabla m_k(x_k) - \nabla f(x_k)\|}{\|\nabla m_k(x_k)-h_k'(x_k)^T\lambda_{k+1}\|} &\le \tau^{f,g}, &
  \frac{\| h_k'(x_k) - c'(x_k)\|}{\|\nabla m_k(x_k)-h_k'(x_k)^T\lambda_{k+1}\|} &\le \tau^{c,g},
\end{align}
and on the constraint function,
\begin{equation} \label{eq:rel-error2}
  \frac{\|h_k(x_k) - c(x_k)\|}{\|h_k(x_k)\|} \le \tau^c.
\end{equation}

\begin{theorem} \label{thm:inexact-fun-conv}
  Let Assumptions~\ref{ass:inexact} hold.
  If Algorithm~\ref{alg:inexact-basic} generates a sequence $\{x_k\}$ of iterates, the iterates satisfy
  \[
    \lim_{k\rightarrow\infty} \|\nabla m_k(x_k) - h_k'(x_k)^T\lambda_{k+1,0} \| =0
     \quad \mbox{ and } \quad 
    \lim_{k\rightarrow\infty} \|h_k(x_k)\|_1 =0.
  \]
  If, in addition, the relative error conditions \eqref{eq:rel-error1} and \eqref{eq:rel-error2} hold then 
  \[
    \lim_{k\rightarrow\infty} \|\nabla f(x_k) - c'(x_k)^T\lambda_{k+1,0} \| =0
     \quad \mbox{ and } \quad 
    \lim_{k\rightarrow\infty} \|c(x_k)\|_1 =0.
  \]
\end{theorem}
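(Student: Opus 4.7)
The plan is to mirror the proof of Theorem~\thmref{thm:exact-convergence}, replacing $f$, $c$ by $m_k$, $h_k$ at every stage, with the extra subtlety that the merit function itself changes from iteration to iteration, which is handled precisely by condition \eqref{eq:xkC-cond}. Because each outer iteration $k$ begins with one full SQP step on the subproblem \eqref{eq:inexact-nlp} with its own merit function $\psi_k$, Lemmas~\thmref{lem:exact-alpha-upper}--\thmref{lem:exact-rho-bound}, applied with $(f,c,\phi,\alpha_k,\rho_k)$ replaced by $(m_k,h_k,\psi_k,\alpha_{k,0},\rho_{k,0})$, transfer verbatim under Assumptions~\thmref{ass:inexact}. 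This yields three things I will use: the generalized Cauchy point $x_k^C$ exists and satisfies \eqref{eq:nexact-merit-sd}, the step size admits a uniform lower bound $\alpha_{k,0}\ge \underline{\alpha}(\bar\rho)>0$ for all sufficiently large $k$, and there is an index $k_0$ and a value $\bar\rho$ with $\rho_k=\bar\rho$ for all $k\ge k_0$.

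Once $\rho_k=\bar\rho$ is fixed, I telescope. Combining the sufficient decrease bound \eqref{eq:nexact-merit-sd} with the ideal decrease condition \eqref{eq:xkC-cond} gives, for each $k\ge k_0$,
\[
\psi_k(x_k;\bar\rho)-\psi_{k+1}(x_{k+1};\bar\rho)\;\ge\;-\,a_1 c_1 \alpha_{k,0}\, D\bigl(\psi_k(x_k;\bar\rho);s_{k,0}\bigr).
\]
Summing from $k_0$ to $K-1$ collapses the left-hand side to $\psi_{k_0}(x_{k_0};\bar\rho)-\psi_K(x_K;\bar\rho)$, which is bounded above independently of $K$ because $m_k(x_k)$ is bounded below (Assumption~\thmref{ass:inexact}(i)) and $\|h_k(x_k)\|_1\ge 0$. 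Using the uniform lower bound on $\alpha_{k,0}$, this forces $D(\psi_k(x_k;\bar\rho);s_{k,0})\to 0$. The analogue of \eqref{eq:exact-dd-ineq} for the model problem now reads
\[
-D\bigl(\psi_k(x_k;\bar\rho);s_{k,0}\bigr)\;\ge\; s_{k,0}^T H_{k,0} s_{k,0} + (\bar\rho-\|\lambda_{k+1,0}\|_\infty)\|h_k(x_k)\|_1,
\]
and both terms on the right are nonnegative since $H_{k,0}$ is positive definite and \eqref{eq:rho} guarantees $\bar\rho-\|\lambda_{k+1,0}\|_\infty\ge\sigma/2$. Consequently $s_{k,0}^T H_{k,0} s_{k,0}\to 0$ and $\|h_k(x_k)\|_1\to 0$. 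By Assumption~\thmref{ass:inexact}(iii), $s_{k,0}\to 0$, and reading off the first block of the model KKT system \eqref{eq:nexact-kkt} gives
\[
\nabla m_k(x_k)-h_k'(x_k)^T\lambda_{k+1,0}=-H_{k,0}s_{k,0}\;\longrightarrow\;0,
\]
which proves the first pair of limits.

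For the second statement, I simply pass from model quantities to exact ones using \eqref{eq:rel-error1} and \eqref{eq:rel-error2}. For the stationarity measure, write
\[
\nabla f(x_k)-c'(x_k)^T\lambda_{k+1,0} = \bigl(\nabla m_k(x_k)-h_k'(x_k)^T\lambda_{k+1,0}\bigr) + \bigl(\nabla f(x_k)-\nabla m_k(x_k)\bigr) + \bigl(h_k'(x_k)-c'(x_k)\bigr)^T\lambda_{k+1,0};
\]
the relative error bounds \eqref{eq:rel-error1} estimate the last two terms by $(\tau^{f,g}+\tau^{c,g}\|\lambda_{k+1,0}\|_\infty)\|\nabla m_k(x_k)-h_k'(x_k)^T\lambda_{k+1,0}\|$, and since $\lambda_{k+1,0}$ is uniformly bounded by the model version of Lemma~\thmref{lem:exact-rho-bound}, the whole expression tends to zero. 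For feasibility, \eqref{eq:rel-error2} yields $\|c(x_k)\|\le(1+\tau^c)\|h_k(x_k)\|$, so $\|c(x_k)\|_1\to 0$ by norm equivalence.

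The main obstacle is the first block: ordinary telescoping is unavailable because the merit function $\psi_k$ changes with $k$, so one must appeal to \eqref{eq:xkC-cond} to cross-compare $\psi_k$ and $\psi_{k+1}$. Everything else is a straightforward transcription of the exact-case reasoning with $(f,c,\phi)$ replaced by $(m_k,h_k,\psi_k)$. I expect no nontrivial calculation beyond bounding the three perturbation terms in the final step.
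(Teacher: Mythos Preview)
Your proposal is correct and follows essentially the same route as the paper's proof: transfer Lemmas~\ref{lem:exact-alpha-upper}--\ref{lem:exact-rho-bound} to the model problem, telescope $\psi_{k_0}(x_{k_0};\bar\rho)-\psi_K(x_K;\bar\rho)$ using \eqref{eq:xkC-cond} together with \eqref{eq:nexact-merit-sd}, deduce $D(\psi_k(x_k;\bar\rho);s_{k,0})\to 0$, extract $s_{k,0}\to 0$ and $\|h_k(x_k)\|_1\to 0$ from the model analogue of \eqref{eq:exact-dd-ineq}, read off stationarity from the first block of \eqref{eq:nexact-kkt}, and finish with the triangle-inequality decomposition under \eqref{eq:rel-error1}--\eqref{eq:rel-error2}. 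Your explicit mention of the uniform bound on $\lambda_{k+1,0}$ (needed to control the $(h_k'-c')^T\lambda_{k+1,0}$ term) and your retention of the factor $a_1$ in the telescoped inequality are in fact slightly more careful than the paper's write-up.
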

\begin{proof}
  The initial steps of this proof apply the proof of Theorem~\ref{thm:exact-convergence} to the sequence of models $m_k$, $h_k$.
  Lemma~\ref{lem:exact-rho-bound} with $f$ and $c$ replaced  by $m_k$ and $h_k$ yields $\rho_k = \bar\rho$ for all $k \ge k_0$.
  Hence, for any $K> k_0$,  \eqref{eq:nexact-merit-sd} implies
  \begin{align*}
    \psi_K(x_K;\bar\rho) - \psi_{k_0}(x_{k_0};\bar\rho) &= \sum_{k=k_0}^{K-1} \psi_{k+1}(x_{k+1};\bar\rho) - \psi_k(x_k;\bar\rho) \\
    &\leq \sum_{k=k_0}^{K-1} a_1\left( \psi_{k}(x_{k}^C;\bar\rho) - \psi_k(x_k;\bar\rho) \right) 
    \leq \sum_{k=k_0}^{K-1} c_1\alpha_k D(\psi_k(x_k;\bar\rho);s_k).
  \end{align*}
  Assumption~\ref{ass:inexact} (i) implies the boundedness of the left hand side so
  $ \infty > \sum_{k=k_0}^{K-1} -c_1\alpha_k D(\psi_k(x_k;\bar\rho);s_k)$.
  Lemma~\ref{lem:exact-alpha-lower}  with $f$ and $c$ replaced  by $m_k$ and $h_k$ yields $\alpha_k \ge \underline{\alpha}(\overline{\rho}) > 0$,
   which implies that $-D(\psi_k(x_k;\bar\rho);s_k) \rightarrow0$ as $k\rightarrow \infty$.
  This limit, along with inequalities on the merit function $\psi_k$ corresponding to  \eqref{eq:exact-dd-ineq} and \eqref{eq:rho} implies that as $k\rightarrow\infty$
  \begin{equation} \label{eq:limit-inexact}
    -D(\psi_k(x_k;\bar\rho);s_k) \geq s_k^TH_{k,0}s_k + (\bar\rho -\|\lambda_{k+1,0}\|_\infty )\|h_k(x_k)\|_1 \geq s_k^TH_{k,0}s_k \rightarrow 0.
  \end{equation}
  Since $-s_k^TH_{k,0}s_k\rightarrow0$, \eqref{eq:limit-inexact} implies that $(\bar\rho -\|\lambda_{k+1,0}\|_\infty)\|h_k(x_k)\|_1\rightarrow0$.
  Noting that $\bar\rho>\|\lambda_{k+1,0}\|_\infty+\sigma/2$ means that $\|h_k(x_k)\|_1\rightarrow0$ as in \eqref{eq:exact-fonc}.
  Applying Assumption~\ref{ass:inexact} (iv) to $-s_k^TH_{k,0}s_k\rightarrow0$ implies that $s_k\rightarrow0$.
  Finally $s_k\rightarrow0$, along with the first line of \eqref{eq:exact-kkt} implies that 
  $ \nabla m_k(x_k) - h_k'(x_k)^T\lambda_{k+1,0} \rightarrow0$ as desired.

  If the relative bounds \eqref{eq:rel-error1}-\eqref{eq:rel-error2} hold then 
  \begin{align*}
    & \lim_{k\rightarrow\infty} \|\nabla f(x_k) - c'(x_k)^T\lambda_{k+1,0} \| \\
    &\le  \lim_{k \rightarrow \infty}  \|\nabla f(x_k) - \nabla m_k(x_k)\| + \|\nabla m_k(x_k) - h_k'(x_k)^T\lambda_{k+1,0} \| + \| h_k'(x_k) - c'(x_k) \| \|\lambda_{k+1,0} \| \\
    &\le  \lim_{k \rightarrow \infty}  ( \tau^{f,g}  + 1 + \tau^{c,g}\|\lambda_{k+1,0}\|) \|\nabla m_k(x_k) - h_k'(x_k)^T\lambda_{k+1,0} \|
    = 0.
  \end{align*}
  and
 $\lim_{k \rightarrow \infty}   \|c(x_k)\|_1 
    \le  \lim_{k \rightarrow \infty}  \| c(x_k) - h_k(x_k)\|_1 + \| h_k(x_k)\|_1  
    \le  \lim_{k \rightarrow \infty}  ( \tau^c  + 1 ) \| h_k(x_k)\|_1 
    = 0$,
  as desired.
\end{proof}

So far, no assumptions other than relative error conditions \eqref{eq:rel-error1} and \eqref{eq:rel-error2} are made for the 
models $m_k$ and $h_k$. However, the convergence Theorem~\ref{thm:inexact-fun-conv} assumes that there
is a sequence  $\{x_k\}$ of iterates. In particular, these iterates have to satisfy  \eqref{eq:xkC-cond}.
As we have explained earlier, satisfying   \eqref{eq:xkC-cond} is difficult and will be addressed next.

%%%%%%%%%%%%%%%%%%%%%%%%%%%%%%%%%%%%%%%%%%%%%%%%%%%%%%%%%%%%%%%
\subsection{A Practical Approach} \label{sec:inexact-algo-practical}

In order to prove Theorem~\ref{thm:inexact-fun-conv}, the existence of a sequence  $\{x_k\}$ of iterates that satisfy the
``ideal" decrease condition \eqref{eq:xkC-cond} was required.
The condition \eqref{eq:xkC-cond} is difficult to use in practice because it requires the evaluation of the models  $m_{k+1}$ and $h_{k+1}$
during iteration $k$.
This challenge can be overcome by extending the ideas in 
\cite{DSGrundvig_MHeinkenschloss_2025a}, \cite{YYue_KMeerbergen_2013a} to the constrained case.
This is where the error bound \eqref{eq:merit-error}, i.e., the bounds  \eqref{eq:obj-error}, are used.
Moreover, we adopt a technique from  \cite{DPKouri_MHeinkenschloss_DRidzal_BGvanBloemenWaanders_2014a}
to handle the presence of uncomputable constants $M_f, M_c$ in the error bounds \eqref{eq:obj-error} and  \eqref{eq:merit-error}.

For now, assume that the error bound functions $e_k$  \eqref{eq:merit-error} for the merit function error satisfy
\begin{align}    \label{eq:inexact-lim-epsilon}
    \lim_{k\rightarrow\infty} e_k(x_{k+1};\rho_k) =0   \quad \mbox{ and } \quad 
    \lim_{k\rightarrow\infty} e_{k+1}(x_{k+1};\rho_k) =0.
\end{align}
Eventually, the conditions \eqref{eq:inexact-lim-epsilon} will be enforced algorithmically, but for clarity of analysis
they will be taken as assumptions for the time being.
Given  $\omega\in(0,1)$, the bound \eqref{eq:merit-error}  implies
\[
     |\phi(x_{k+1};\rho_k) - \psi_k(x_{k+1};\rho_k)|  
     \le  \max\{ M_f, M_c \} \, e_k(x_{k+1};\rho_k)^{(1-\omega)}   e_k(x_{k+1};\rho_k)^\omega.
\]
If \eqref{eq:inexact-lim-epsilon} holds, for large enough $k$ it follows that $ \max\{ M_f, M_c \} e_k(x;\rho)^{(1-\omega)}  \le 1$, which in turn implies
\begin{equation} \label{eq:merit-error-eps-k-omega-bnd}
     |\phi(x_{k+1};\rho_k) - \psi_k(x_{k+1};\rho_k)|    \le  e_k(x_{k+1};\rho_k)^\omega.
\end{equation}
A similar argument implies that for $k$ large enough
\begin{equation} \label{eq:merit-error-eps-k1-omega-bnd}
  |\phi(x_{k+1};\rho_k) - \psi_{k+1}(x_{k+1};\rho_k)|    \le  e_{k+1}(x_{k+1};\rho_k)^\omega.
\end{equation}

The merit function error bounds \eqref{eq:merit-error-eps-k-omega-bnd},  \eqref{eq:merit-error-eps-k1-omega-bnd}  imply that for $k$ large enough
\begin{align*}
  & \psi_k(x_k;\rho_k) - \psi_{k+1}(x_{k+1};\rho_k) \\
  &= \psi_k(x_k;\rho_k) -\psi_k(x_{k+1};\rho_k) +\psi_k(x_{k+1};\rho_k) - \phi(x_{k+1};\rho_k) +\phi(x_{k+1};\rho_k)- \psi_{k+1}(x_{k+1};\rho_k)\\
  &\ge \psi_k(x_k;\rho_k) -\psi_k(x_{k+1};\rho_k) -e_{k}(x_{k+1};\rho_k)^\omega -e_{k+1}(x_{k+1};\rho_k)^\omega.
\end{align*}
Thus, \eqref{eq:xkC-cond} is implied by 
\begin{equation} \label{eq:nexact-sc}
  -e_{k+1}(x_{k+1};\rho_k)^\omega -e_{k}(x_{k+1};\rho_k)^\omega +\psi_k(x_k;\rho_k) -\psi_k(x_{k+1};\rho_k)
  \geq a_1 \left( \psi_{k}(x_{k};\rho_k) - \psi_k(x^C_k;\rho_k) \right).
\end{equation}
A rearrangement of \eqref{eq:nexact-sc} leads to 
\begin{align} \label{eq:new-model}
	e_{k+1}(x_{k+1};\rho_k)^\omega
	  \leq & -e_k(x_{k+1};\rho_k)^\omega -\psi_k(x_{k+1}; \rho_k) +\psi_k(x_k^C; \rho_k) \nonumber \\
	         & + (1-a_1)\left(\psi_k(x_k; \rho_k) - \psi_k(x_k^C; \rho_k)\right).
\end{align}
Condition \eqref{eq:new-model} implies \eqref{eq:xkC-cond}.
As long as the right-hand side of \eqref{eq:new-model} is positive, the condition is implementable since it sets an error tolerance for the model 
constructed at the $(k+1)$st iteration.

The positivity of the right-hand side of \eqref{eq:new-model} can be ensured by requiring the following two conditions:
\begin{equation} \label{eq:simple-cond}
  \psi_k(x_{k+1}; \rho_k)\leq \psi_k(x_k^C; \rho_k)
\end{equation}
and
\begin{equation} \label{eq:mod-constraint}
  e_k(x_{k+1};\rho_k)^\omega \leq a_2(1-a_1)\left(\psi_k(x_k; \rho_k) - \psi_k(x_k^C; \rho_k)\right),
\end{equation}
for $a_2\in (0,1]$.
Since both \eqref{eq:simple-cond} and \eqref{eq:mod-constraint} are independent of the new models $m_{k+1}$ and $h_{k+1}$, 
they can be enforced and verified during the $k$th iteration. We will describe this next.

First, recall that throughout this discussion, it has been assumed that \eqref{eq:inexact-lim-epsilon} holds.
The left limit in \eqref{eq:inexact-lim-epsilon} is enforced by requiring that, in addition to  \eqref{eq:mod-constraint},
\begin{equation} \label{eq:inexact-rk-bnd}
    e_k(x_{k+1};\rho_k)^\omega  \leq r_k
\end{equation}
is satisfied
where $\{r_k\}_{k=0}^\infty$ is a sequence such that
\begin{equation} \label{eq:inexact-rk}
  \lim_{k\rightarrow\infty} r_k=0.
\end{equation}
Similarly, the right limit in \eqref{eq:inexact-lim-epsilon} is enforced by requiring that,  
\begin{equation} \label{eq:ls:ek1-rk-bnd}
  e_{k+1}(x_{k+1};\rho_k)^\omega  \leq r_k.
\end{equation}
Note that a similar forcing sequence $\{r_k\}$ was introduced in the inexact trust-region method found in \cite{DPKouri_MHeinkenschloss_DRidzal_BGvanBloemenWaanders_2014a}.

The conditions \eqref{eq:mod-constraint}, \eqref{eq:inexact-rk-bnd} will be added as an additional constraint to the inner optimization subproblem \eqref{eq:inexact-nlp}
used to compute $x_{k+1}$.  Specifically,  $x_{k+1}$ will be chosen as an approximate solution to
\begin{subequations} \label{eq:nexact-subprob}
\begin{align}
  \min \quad &m_k(x),   \label{eq:nexact-subprob-a} \\
  \text{s.t.}\quad &h_k(x)=0,   \label{eq:nexact-subprob-b} \\
  &e_k(x;\rho_k)^\omega \leq \min\{ r_k, \, a_2(1-a_1)\left(\psi_k(x_k; \rho_k) - \psi_k(x_k^C; \rho_k)\right) \}. \label{eq:nexact-subprob-extra}
\end{align}
\end{subequations}
This subproblem does not need to be solved exactly, and we will later discuss how we compute an approximate solution $x_{k+1}$
of \eqref{eq:nexact-subprob}.

The final condition left to be enforced is \eqref{eq:simple-cond}.
If the generalized Cauchy point $x_k^C$ satisfies \eqref{eq:nexact-subprob-extra}, then \eqref{eq:simple-cond} is satisfied
for $x_{k+1} = x_k^C$ and any approximate solution that gives a lower merit function value than $x_k^C$ and satisfies \eqref{eq:nexact-subprob-extra}.
Thus, we require that the generalized Cauchy point $x_k^C$ satisfies \eqref{eq:nexact-subprob-extra}, i.e., that
\begin{equation} \label{eq:cauchy-cond}
  e_k(x_k^C;\rho_k)^\omega \leq \min\{ r_k, \, a_2(1-a_1)\left(\psi_k(x_k; \rho_k) - \psi_k(x_k^C; \rho_k)\right) \}.
\end{equation}
Once a candidate generalized Cauchy point $x_k^C$ has been computed as before, we check whether it
satisfies \eqref{eq:cauchy-cond}. If not, the models $m_k$ and $h_k$ have to be refined, the
generalized Cauchy point $x_k^C$ has to be re-computed, and the process is repeated.
Since the algorithm described in this section is model agnostic, additional details on refinement will not be discussed here.
For some conditions under which it is possible to prove the existence of an acceptable Cauchy point in the unconstrained case, see \cite[Sec.~3.2]{DSGrundvig_MHeinkenschloss_2025a}.

If a generalized Cauchy point $x_k^C$ that
satisfies \eqref{eq:cauchy-cond} is computed, then $x_{k+1}$ is computed using an SQP method applied to \eqref{eq:nexact-subprob-a},  \eqref{eq:nexact-subprob-b},
started at  $x_k^C$. The constraint  \eqref{eq:nexact-subprob-extra} is not used in the SQP step computation, but simply enforced using a simple backtracking.
Algorithm~\ref{alg:subprob} on the next page describes the SQP approach used to compute $x_{k+1} = x_{k,j}$. 
Specifically, at step $j$ of solving the subminimization problem, a descent direction $s_{k,j}$ as well as a penalty parameter $\rho_{k,j}$
are generated by the SQP method applied to \eqref{eq:nexact-subprob-a},  \eqref{eq:nexact-subprob-b}.
In addition to the backtracking done to satisfy sufficient decrease \eqref{eq:exact-merit-sd}, the step-size $\alpha_{k,j}$ will be reduced until \eqref{eq:mod-constraint} is satisfied
(line \ref{alg:subprob:backtrack}).
Some additional decrease verification is also used when the penalty parameter is increased (see line~\ref{alg:subprob:rho-check}).
Algorithm~\ref{alg:subprob} is guaranteed to find $x_{k+1}$ that satisfies \eqref{eq:mod-constraint} because the initial iterate $x_{k,0}=x_k^C$ 
satisfies \eqref{eq:mod-constraint} and $\psi_k(x_{k,j}; \rho_k) \le \psi_k(x_k^C; \rho_k)$ is enforced.

\begin{algorithm}[!htb]
	\caption{Subminimization SQP Algorithm} \label{alg:subprob}
	\begin{algorithmic}[1]    
    \REQUIRE $x_k^C$, $\lambda_{k,0}$, $\rho_k$, $c_1\in(0,1)$,  $\sigma>0$, tolerances $\mbox{tol}_f,\mbox{tol}_c>0$.
    \ENSURE Point $x_{k,j}$ where $\| \nabla m_k(x_{k,j}) - h_k'(x_{k,j})^T\lambda_{k+1,j}\| < \mbox{tol}_f$, $\| h_k(x_{k,j}) \|_1 < \mbox{tol}_c$.
    \STATE Set $x_{k,0}= x_k^C$ and $\rho_{k,0}=\rho_k$.
    \FOR{$j=0, 1,2,3,\dots$}
      \STATE {\bf if }$\| \nabla m_k(x_{k,j}) - h_k'(x_{k,j})^T\lambda_k\| < \mbox{tol}_f$ and $\| c(x_k) \|_1 < \mbox{tol}_c$, {\bf then} stop.
      \STATE Find $s_{k,j}$, $\lambda_{k+1,j}$ by solving \eqref{eq:exact-kkt} with $H_k, f(x_k), c(x_k), c'(x_k)$ replaced by $H_{k,j}, m_k(x_{k,j}), h_k(x_{k,j}), h_k'(x_{k,j})$.
      \STATE Compute $\rho_{k,j}$ using \eqref{eq:rho} with $\lambda_{k+1}, \rho_{k-1}$ replaced by $\lambda_{k+1,j}, \rho_{k,j-1}$.
      \STATE Use backtracking line-search to find $\alpha_{k,j}$ that satisfies the sufficient decrease condition 
                    $ \psi_k(x_k +\alpha_{k,j}s_{k,j};\rho_{k,j})\leq \psi_k(x_k ;\rho_{k,j}) + c_1\alpha_{k,j} D(\psi_k(x_k ;\rho_{k,j}); s_{k,j})$ and 
                     \eqref{eq:nexact-subprob-extra}.      \label{alg:subprob:backtrack}
      \STATE Set $x_{k,j+1}= x_{k,j}+\alpha_{k,j} s_{k,j}$.
     \STATE {\bf if } $\rho_{k,j}>\rho_k$ and $\psi_k(x_{k,j}; \rho_k)> \psi_k(x_k^C; \rho_k)$, {\bf then} set $x_{k,j+1}=x_{k,j}$ and stop.  \label{alg:subprob:rho-check}
     \ENDFOR
	\end{algorithmic}
\end{algorithm}

Finally, once $x_{k+1}$ has been computed, new models $m_{k+1}$, $h_{k+1}$ are generated such that the associated
merit function error bound $e_{k+1}(x_{k+1};\rho_k)$ satisfies \eqref{eq:new-model} and \eqref{eq:ls:ek1-rk-bnd}, i.e., satisfies
\begin{align} \label{eq:new-model-rev}
	e_{k+1}(x_{k+1};\rho_k)^\omega
	  \leq  \min\big\{ r_k, & -e_k(x_{k+1};\rho_k)^\omega -\psi_k(x_{k+1}; \rho_k) +\psi_k(x_k^C; \rho_k) \nonumber \\
	                                 & + (1-a_1)\left(\psi_k(x_k; \rho_k) - \psi_k(x_k^C; \rho_k)\right) \big\}.
\end{align}

All of the work in this section can now be combined into a complete Line-Search SQP Algorithm~\ref{alg:inexact}.
The inner loop over $i$ enforces that the generalized Cauchy point $x_k^C$ satisfies \eqref{eq:cauchy-cond}.
Given model $m_k$ and $h_k$ we compute a generalized Cauchy point $x_k^C$. If it violates  \eqref{eq:cauchy-cond},
the models $m_k$ and $h_k$ are refined by incorporating information at the current $x_k^C$. 
Instead of refining $m_k$ and $h_k$, which has an additional cost, one can first perform an additional
backstep in Step~\ref{alg:inexact-additional-backstep} of Algorithm~\ref{alg:inexact} to compute  $x_k^C$, as done in the unconstrained case
\cite{DSGrundvig_MHeinkenschloss_2025a}.

\begin{algorithm}[htp]
	\caption{Line-Search SQP Algorithm} \label{alg:inexact}
	\begin{algorithmic}[1]    
    \REQUIRE $x_0$, $c_1\in(0,1)$,  $a_2\in(0,1]$, $a_1,\delta\in(0,1)$, $\tau_0 > 0$,
    $\tau^{f,g},\tau^{c,g},\tau^c \in (0,1)$, $\rho_0,\sigma>0$, $\omega \in (0,1)$, $\{r_k\}_{k=0}^\infty$ with $\lim_{k\rightarrow\infty} r_k=0$,
  %backtrack parameters $0<\widetilde\beta_1 \leq \widetilde\beta_2\leq1$,
    tolerances $\mbox{tol}_f,\mbox{tol}_c>0$
    \ENSURE Point $x_K$ where $\| \nabla m_K(x_K) - h_K'(x_K)^T\lambda_{K+1}\| < \mbox{tol}_f$, $\| h_K(x_K) \|_1 < \mbox{tol}_c$.

    \FOR{$k=0,1,2,\dots$}
      \STATE {\bf if} $\| \nabla m_k(x_k) - h_k'(x_k)^T\lambda_{k+1}\| < \mbox{tol}_f$ and $\| h_k(x_k) \|_1 < \mbox{tol}_c$. {\bf then} stop.
      \FOR{$i=0,1,\dots$} \label{alg:inexact-loop}
        \IF{$i=0$}
          \STATE Construct models  $m_k$ and $h_k$ that satisfy $e_k(x_k;\rho_k) \leq \tau_k$ and \eqref{eq:rel-error1}, \eqref{eq:rel-error2}.
                   \label{alg:inexact-model-construct}
        \ELSE
          \STATE Construct refined models  $m_k$ and $h_k$ that satisfy $e_k(x_k;\rho_k) \leq \tau_k$ and  \eqref{eq:rel-error1}, \eqref{eq:rel-error2}.
                 \label{alg:inexact-model-refine}
        \ENDIF
        \STATE Find $s_{k,0}$, $\lambda_{k+1}$ by solving \eqref{eq:nexact-kkt}.
        \STATE Set $\rho_k$ using \eqref{eq:rho} with $\lambda_{k+1}$ replaced by $\lambda_{k+1,0}$.
        \STATE Find $\alpha_{k,0}$ that satisfies the sufficient decrease condition \eqref{eq:nexact-merit-sd}.
                     Set $x^C_k= x_k+\alpha_{k,0} s_{k,0}$.
        \STATE If $x_k^C= x_k+\alpha_{k,0} s_{k,0}$ satisfies \eqref{eq:cauchy-cond}, goto line~\ref{alg:inexact-cont}.     \label{alg:inexact-additional-backstep}
        %\IF{ $\widetilde{\beta}_1 < 1$}
          %\STATE Set $\alpha_k^{\rm old} := \alpha_{k,0}$.
          %\STATE Try to find $\alpha_{k,0} \in [ \widetilde{\beta}_1 \alpha_k^{\rm old},  \widetilde{\beta}_2 \alpha_k^{\rm old} ]$ 
          %so that  $x_k^C= x_k+\alpha_{k,0} s_{k,0}$ satisfies \eqref{eq:nexact-merit-sd} and \eqref{eq:cauchy-cond}.
          %\STATE If $x_k^C= x_k+\alpha_{k,0} s_{k,0}$ satisfies \eqref{eq:nexact-merit-sd} and \eqref{eq:cauchy-cond} then goto line~\ref{alg:inexact-cont}.  \label{alg:inexact-verify2}
        %\ENDIF
      \ENDFOR
      \STATE Find $x_{k+1}$ using Algorithm~\ref{alg:subprob}.     \label{alg:inexact-cont}
      \STATE Set \begin{align*}
        \tau_{k+1} = \min\big\{ r_k, &  -e_k(x_{k+1};\rho_k)^\omega -\phi_k(x_{k+1};\rho_k) +\phi_k(x_k^C;\rho_k) \\
                                              &+ (1-a_1)\left(\phi_k(x_k;\rho_k) - \phi_k(x_k^C;\rho_k)\right) \big\}.
      \end{align*} \label{alg:inexact-new-err}
		\ENDFOR
	\end{algorithmic}
\end{algorithm}

\clearpage

Corollary~\ref{cor:inexact-fun-conv} gives the final convergence result for the practical Algorithm~\ref{alg:inexact}.

\begin{corollary} \label{cor:inexact-fun-conv}
  Let Assumptions~\ref{ass:inexact} hold.
  If the number of times the model construction loop in line~\ref{alg:inexact-loop} of Algorithm \ref{alg:inexact}
  is bounded uniformly in $k$, 
  then the iterates calculated by the line-search in Algorithm~\ref{alg:inexact} satisfy
  \[
    \lim_{k\rightarrow\infty} \|\nabla f(x_k) - c'(x_k)^T\lambda_{k+1,0} \| =0
     \quad \mbox{ and } \quad 
    \lim_{k\rightarrow\infty} \|c(x_k)\|_1 =0.
  \]
 \end{corollary}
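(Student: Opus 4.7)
The plan is to reduce Corollary~\ref{cor:inexact-fun-conv} to Theorem~\ref{thm:inexact-fun-conv} by showing that, under the uniform-boundedness hypothesis on the inner $i$-loop, the outer iterates produced by Algorithm~\ref{alg:inexact} satisfy the idealized descent condition \eqref{eq:xkC-cond} required by Algorithm~\ref{alg:inexact-basic} for all sufficiently large $k$. Once Theorem~\ref{thm:inexact-fun-conv} delivers the limits involving $m_k$ and $h_k$, the relative error conditions \eqref{eq:rel-error1}--\eqref{eq:rel-error2} (imposed explicitly during the model construction at line~\ref{alg:inexact-model-construct}) transfer the limits to $f$ and $c$ by exactly the triangle-inequality argument used at the end of the proof of Theorem~\ref{thm:inexact-fun-conv}.

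First I would use the uniform-boundedness hypothesis on the $i$-loop at line~\ref{alg:inexact-loop} to conclude that for every $k$ the loop terminates with a generalized Cauchy point $x_k^C$ satisfying \eqref{eq:cauchy-cond}, so in particular $x_k^C$ is feasible for the additional constraint \eqref{eq:nexact-subprob-extra} of the inner subproblem. Running Algorithm~\ref{alg:subprob} from $x_{k,0} = x_k^C$ then produces $x_{k+1}$ satisfying $\psi_k(x_{k+1};\rho_k) \le \psi_k(x_k^C;\rho_k)$ together with \eqref{eq:nexact-subprob-extra}, which directly gives both \eqref{eq:simple-cond} and \eqref{eq:mod-constraint}, as well as the auxiliary forcing bound \eqref{eq:inexact-rk-bnd}.

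Next I would verify that both halves of \eqref{eq:inexact-lim-epsilon} hold. The left limit follows from \eqref{eq:inexact-rk-bnd} combined with $r_k \to 0$. For the right limit, the model constructed at iteration $k+1$ satisfies $e_{k+1}(x_{k+1};\rho_k) \le \tau_{k+1}$, and by the update rule at line~\ref{alg:inexact-new-err} we have $\tau_{k+1} \le r_k$, so $e_{k+1}(x_{k+1};\rho_k) \to 0$ as well. With \eqref{eq:inexact-lim-epsilon} in hand, the error bounds \eqref{eq:merit-error-eps-k-omega-bnd} and \eqref{eq:merit-error-eps-k1-omega-bnd} become valid for all $k$ beyond some threshold, and chaining them with \eqref{eq:simple-cond} and \eqref{eq:mod-constraint} yields \eqref{eq:nexact-sc}, which rearranges to \eqref{eq:xkC-cond}. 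Theorem~\ref{thm:inexact-fun-conv} then applies directly.

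The hard part will be the bookkeeping to pin every hypothesis of Theorem~\ref{thm:inexact-fun-conv} to a specific enforcement mechanism in Algorithm~\ref{alg:inexact}: that $e_k(x_{k+1};\rho_k)$ is controlled by the $\min$ in \eqref{eq:nexact-subprob-extra} via backtracking inside Algorithm~\ref{alg:subprob} (line~\ref{alg:subprob:backtrack}), that $e_{k+1}(x_{k+1};\rho_k)$ is controlled jointly by the choice of $\tau_{k+1}$ and the model-construction step at the next outer iteration, and that the relative error conditions \eqref{eq:rel-error1}--\eqref{eq:rel-error2} persist through any model refinement. The uniform-boundedness assumption on the $i$-loop is precisely what prevents the algorithm from stalling at a fixed $k$ while refining models indefinitely, and is therefore exactly what is needed to guarantee $k \to \infty$ along the generated sequence so that Theorem~\ref{thm:inexact-fun-conv} can be invoked.
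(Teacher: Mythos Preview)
Your proposal is correct and follows essentially the same route as the paper: establish \eqref{eq:simple-cond}, \eqref{eq:mod-constraint}, and \eqref{eq:inexact-lim-epsilon} from the mechanics of Algorithm~\ref{alg:inexact}, deduce \eqref{eq:xkC-cond} for large $k$, and invoke Theorem~\ref{thm:inexact-fun-conv} together with the relative error conditions. Your write-up is in fact more explicit than the paper's own proof about how \eqref{eq:inexact-lim-epsilon} is enforced via the $r_k$ forcing sequence and the $\tau_{k+1}$ update, which the paper leaves implicit in its terse chain of implications.
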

\begin{proof}
  The iterates $x_k$ generated by Algorithm~\ref{alg:inexact} satisfy conditions \eqref{eq:simple-cond} and \eqref{eq:mod-constraint},
  which together imply \eqref{eq:new-model}, which in turn implies  \eqref{eq:xkC-cond}.
  Thus, for large enough $k$, the condition \eqref{eq:xkC-cond} is satisfied.
  In addition, the iterates $x_k$ also satisfy the relative error conditions \eqref{eq:rel-error1}, \eqref{eq:rel-error2}.
  Therefore, the iterates generated by Algorithm~\ref{alg:inexact} satisfy Algorithm~\ref{alg:inexact-basic} and the conditions of
  Theorem~\ref{thm:inexact-fun-conv}.  Thus, the desired result follows from Theorem~\ref{thm:inexact-fun-conv}.
\end{proof}

Since Algorithm~\ref{alg:inexact} is agnostic of how models are computed and refined, the assumption that the
number of times the model construction loop in line~\ref{alg:inexact-loop} of Algorithm~\ref{alg:inexact} is bounded uniformly in $k$
is needed in Corollary~\ref{cor:inexact-fun-conv}.
 For the important case where models are generated from ROMs as outlined in the introduction (see \eqref{eq:exact-nlp-implicit-constraints}
and \eqref{eq:exact-nlp-implicit-constraints-models}) and used in Section~\ref{sec:numerics}, one can prove that this assumption
holds. For the unconstrained case see \cite[Sec.~3.2]{DSGrundvig_MHeinkenschloss_2025a} and the constrained case see  \cite[Sec.~5.3.3]{DSGrundvig_2025a}.
%%%%%%%%%%%%%%%%%%%%%%%%%%%%%%%%%%%%%%%%%%%%%%%%%%%%%%%%%%%%%%%%

%!TEX root = main.tex
%%%%%%%%%%%%%%%%%%%%%%%%%%%%%%%%%%%%%%%%%%%%%%%%%%%

\section{Numerical Results} \label{sec:numerics}

This section considers a boundary control problem governed by the Boussinesq equations.
The unconstrained version of this boundary control problem was studied in  \cite{KIto_SSRavindran_1998b},
with emphasis on finite element discretization of the optimal control problem.
This section uses a change in notation.
The optimization variables are the control $u$ and its discretization $\bu$, whereas $x$ denotes a point in
the domain $\Omega$ or on its boundary.

%%%%%%%%%%%%%%%%%%%%%%%%%%%%%%%%%%%%%%%%%%%%%%%%%%%
\subsection{Boundary Control of the Boussinesq PDE}
Let $\Omega=(0,8)\times(0.5,1) \cup (1,8)\times(0,0.5)$ be the domain given in Figure~\ref{fig:bous-grid}.
Let $\Gamma_{\text{in}}=\{0\}\times(0.5,1)$ be the inflow boundary, $\Gamma_{\text{out}}=\{8\}\times(0,1)$ the outflow boundary,
$\Gamma_c=((0,1)\times\{0.5\})\cup(\{1\}\times(0,0.5))$ the channel boundary, and $\Gamma_d=\partial\Omega\setminus(\Gamma_{\text{in}}\cup\Gamma_{\text{out}}\cup\Gamma_c)$ the control boundary.
See Figure~\ref{fig:bous-grid} for an illustration.
\begin{figure}[!htb]
	\centering
	\includegraphics[width=0.85\textwidth]{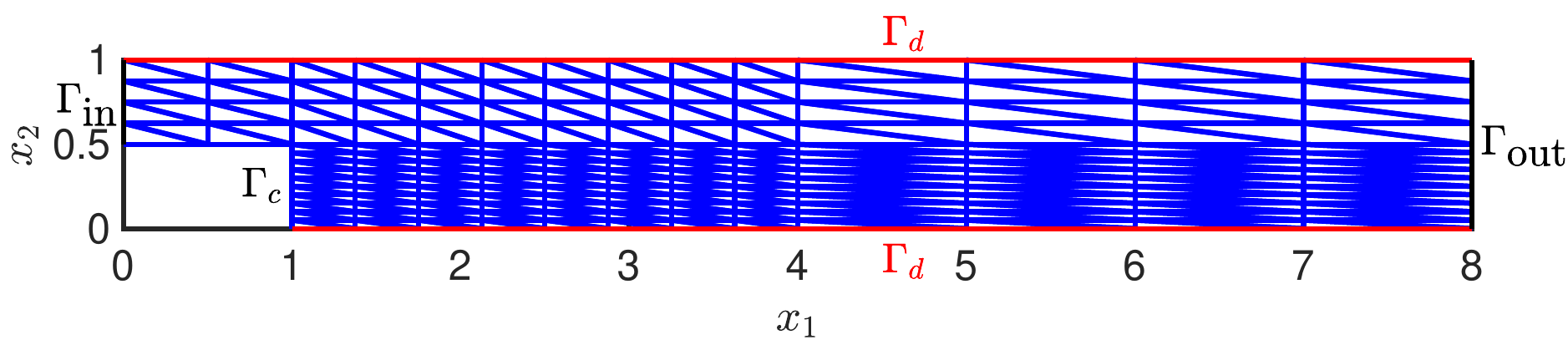}
  \caption{Geometry of the backstep region with corresponding (coarse) finite element grid.}
  \label{fig:bous-grid}
\end{figure}

Given a temperature control $u$ on $\Gamma_d$, the velocity $v$, temperature $T$, and pressure $p$ are solutions to the nondimensional Boussinesq equations
\begin{subequations} \label{eq:boussi-eq}
  \begin{align} 
    -\frac{1}{Re} \Delta v(x) +(v(x)\cdot\nabla)v(x) +\nabla p(x) +\frac{Gr}{Re^2}T(x)\bg&=0 &x\in\Omega,\\
    \nabla\cdot v(x)&=0 &x\in\Omega,\\
    -\frac{1}{RePr}\Delta T(x) +v(x)\cdot \nabla T(x) &=0 &x\in\Omega,\\
    v(x)=0, \quad \frac{\partial T}{\partial n}(x) + T(x)&=u(x) &x\in\Gamma_d, \label{eq:ecmod:BS:boussi-eq-cntrl} \\
    v(x)=0,\quad  T(x)&=1 &x\in\Gamma_c,\label{eq:ecmod:BS:ns-eqd}\\
    v(x)=d_{\text{in}}(x),\quad T(x)&=0 &x\in\Gamma_{\text{in}},\\
    v(x)=d_{\text{out}}(x),\quad \frac{\partial T}{\partial n}(x)&=0 &x\in\Gamma_{\text{out}}.
  \end{align}
\end{subequations}
In \eqref{eq:boussi-eq} $Re$ is the Reynolds number, $Gr$ the Grashof number, $Pr$ the Prandtl number, $n$ is the outward unit normal,
$d_{\text{in}}$, $d_{\text{out}}$ are given Dirichlet velocity boundary data, set to
\[
  d_{\text{in}}(x) = \big(8(x_2-0.5)(1-x_2), \; 0 \big)^T, \quad \mbox{ and } \quad
  d_{\text{out}}(x) = \big((1-x_2)x_2, \; 0 \big)^T,
\]
and $\bg=(0,-1)^T$ is the nondimensionalized gravitational acceleration. 

We want to find a control $u \in L^2(\Gamma_d)$ that minimizes recirculation in the subdomain $D=[1,3]\times[0,0.5]$ behind the step.
Recirculation is quantified using the integral of the vorticity squared. Moreover, we add a penalty term for the control.
Thus, the unconstrained optimal control problem is
\begin{equation} \label{eq:boussi:unc-opt}
    \min_{u\in L^2(\Gamma_d) }\quad \frac{1}{2} \int_D \big(\partial_{x_1}v_2(x;u) -\partial_{x_2}v_1(x;u) \big)^2 dx +\frac{\omega}{2}\int_{\Gamma_d} u^2(x) dx,
\end{equation}
where the velocity $v(\cdot; u)$, the pressure $p(\cdot; u)$, and the temperature $T(\cdot; u)$ solve \eqref{eq:boussi-eq}.
For the constrained version, we include a constraint on the temperature. Specifically, given the subdomain $\Omega_T= [1,8]\times [0,0.5]$ behind the step, 
\begin{equation} \label{eq:boussi:const-opt}
    \min_{u\in L^2(\Gamma_d) } \; \frac{1}{2} \int_D \big(\partial_{x_1}v_2(x;u) -\partial_{x_2}v_1(x;u) \big)^2 dx +\frac{\omega}{2}\int_{\Gamma_d} u^2(x) dx, \qquad
    \text{s.t.} \;\; \int_{\Omega_T} T^2(x;u) dx = T_d,
\end{equation}
where again the velocity $v(\cdot; u)$, the pressure $p(\cdot; u)$, and the temperature $T(\cdot; u)$ solve \eqref{eq:boussi-eq}.
Given Reynolds number $Re$, Grashof number $Gr$, and $Pr$ Prandtl number, the prescribed temperature $T_d$ is
set to be $\widehat T_d/2$,
where $\widehat T_d =  \int_{\Omega_T} T^2(x;u_{\text{unc}})dx$ is computed using optimal temperature for the unconstrained problem \eqref{eq:boussi:unc-opt}.
The control penalty parameter is
\[
  \omega = 10^{-2}.
\]

%%%%%%%%%%%%%%%%%%%%%%%%%%%%%%%%%%%%%%%%%%%%%%%%%%%
\subsection{Discretization}
The optimal control problem is discretized using a Taylor-Hood finite element approximation. See, e.g., \cite{HCElman_DJSilvester_AJWathen_2014a}.
Specifically, the finite element mesh is a uniform refinement of the mesh shown in Figure~\ref{fig:bous-grid}.
The mesh used has either 3,168 ($Re=$ 10) or 8,800 ($Re=$ 100 and 200) triangles.
Piecewise quadratic elements are used for the velocity and the temperature, piecewise linear elements are used for the pressure.

The boundary control is approximated by 
\begin{equation} \label{eq:ecmod:BS:cont-fe}
  u_h(x)= \sum_{j=1}^{n_u} \bu^{(j)} \varphi_j(x),
\end{equation}
where $n_u=34$ and $\varphi_j$ are the control basis functions.
The basis functions are shifted versions of the following two functions
\[
  H_1(x_1) :=\begin{cases}
    (x_1+1)^2(1-2x_1) & -1<x_1\leq 0,  \\
    (1-x_1)^2(1+2x_1) & 0<x_1\leq 1, \end{cases} \qquad
  H_2(x_1) :=\begin{cases}
    x_1(x_1+1)^2 & -1<x_1\leq 0,  \\
    x_1(1-x_1)^2 & 0<x_1\leq 1. \end{cases}
\]
The first 18 basis functions correspond to the top boundary $[0,8]\times\{1\}$, with one each of $H_1$ and $H_2$ centered at the grid points $x_1=0,\ldots,8$,
\[
  \varphi_{2j+1}(x)= H_1(x_1-j), \quad \varphi_{2j+2}(x)= H_2(x_1-j) \qquad x\in [0,8]\times\{1\},\quad j=0,\ldots,8.
\]
The remaining 16 basis functions correspond to the bottom boundary $[1,8]\times\{0\}$
\[
  \varphi_{2j+17}(x)= H_1(x_1-j), \quad \varphi_{2j+18}(x)= H_2(x_1-j) \qquad x\in [1,8]\times\{0\},\quad j=1,\ldots,8.
\]

The discretization of the Boussinesq equations \eqref{eq:boussi-eq} is given by a system
\begin{equation} \label{eq:boussi-eq-disc}
     \BR(\by;\bu) =  \BR\big((\bv, \bp, \BT)^T;\bu\big) = \bzero,
\end{equation}
where $\bu \in \real^{n_u}$, $n_u = 34$, are the discretized controls,
$\by = (\bv, \bp, \BT)^T \in \real^N$, where $N=$ 21,184 ($Re=$ 10) or $N=$ 58,184 ($Re=$ 100 and 200), are the discretized velocities, pressure, and temperature.

The discretization of the unconstrained problem  \eqref{eq:boussi:unc-opt} is given by
\begin{equation} \label{eq:boussi:unc-opt-disc}
   \min_{\bu \in \real^{n_u} } \quad     f(\bu) :=  \frac{1}{2} \Big\| \BQ  \big( \BI_\cB \bd +  \BI_\cF \bv(\bu)\big) \Big\|_2^2  +\frac{\omega}{2}\bu^T\BG\bu,
\end{equation}
where $\by(\bu) = \big( \bv(\bu),\, \bp(\bu),\, \BT(\bu) \big)^T \in \real^N$ solves \eqref{eq:boussi-eq-disc}.
The term $\BI_\cB \bd +  \BI_\cF \bv(\bu)$ is due to the fact that the velocities at the boundary are specified and $\bv(\bu)$
represents the velocities at interior nodes. For details, see \cite[Sec.~4.3]{DSGrundvig_2025a}. 
The discretization of the constrained problem  \eqref{eq:boussi:const-opt} is given by
\begin{subequations} \label{eq:boussi:const-opt-disc}
  \begin{align} 
   \min_{\bu \in \real^{n_u} } \quad   &  f(\bu) :=  \frac{1}{2} \Big\| \BQ  \big( \BI_\cB \bd +  \BI_\cF \bv(\bu)\big) \Big\|_2^2  +\frac{\omega}{2}\bu^T\BG\bu,  \\
   \text{s.t.} \quad &   c(\bu) :=  ( \BI_{\cB_T} \mathbf{1} +  \BI_{\cF_T} \BT(\bu))^T\BK( \BI_{\cB_T} \mathbf{1} +  \BI_{\cF_T} \BT(\bu)) - T_d  = 0,   \label{eq:boussi:const-opt-disc-b}
  \end{align}
\end{subequations}
where, again, $\by(\bu) = \big( \bv(\bu),\, \bp(\bu),\, \BT(\bu) \big)^T \in \real^N$ solves \eqref{eq:boussi-eq-disc}.
The term $\BI_{\cB_T} \mathbf{1} +  \BI_{\cF_T} \BT$ in \eqref{eq:boussi:const-opt-disc-b} is due to the fact that the temperature on $\Gamma_c$ is one  and $\BT(\bu)$
represents the temperature at nodes not on $\Gamma_{\text{in}} \cup \Gamma_c$. For details, see \cite[Sec.~4.3]{DSGrundvig_2025a}. 

%%%%%%%%%%%%%%%%%%%%%%%%%%%%%%%%%%%%%%%%%%%%%%%%%%%
\subsection{ROM Construction}
We use a Galerkin ROM. In the $k$-th outer iteration, the FOM \eqref{eq:boussi-eq-disc} is approximated by the ROM
\begin{equation} \label{eq:boussi-eq-disc-ROM}
    \BV_k^T  \BR( \BV_k \by;\bu) =  \BV_k^T \BR\big(\BV_k (\bv, \bp, \BT)^T;\bu\big) = \bzero.
\end{equation}
The ROM matrix $\BV_k$ has block diagonal structure with diagonal blocks
$\BV^y\in\real^{|\cF_y|\times r_y}$, $\BV^p\in\real^{n_p\times r_p}$,  $\BV^t\in\real^{|\cF_T|\times r_T}$ corresponding
to velocity, pressure, and temperature components of the state.

To construct a new model at step $k$ of the ROM based optimization algorithm (see line~\ref{alg:inexact-model-construct} in Algorithm~\ref{alg:inexact}),
information from the FOM at the current iterate $\bu_k$ is used.
The FOM state equation \eqref{eq:boussi-eq-disc}  is solved for $\by(\bu_k) , \bp(\bu_k), \BT(\bu_k)$.
In addition, the FOM adjoint equation is solved for $\blambda_\by(\bu_k) , \blambda_\bp(\bu_k) , \blambda_\BT(\bu_k)$
and the FOM sensitivities $\by_\bu(\bu_k), \bp_\bu(\bu_k), \BT_\bu(\bu_k)$ are computed.
Since different solutions have different scales, they are orthonormalized individually and then orthonormalized again to 
compute
\begin{align*} 
    \widetilde\BV^y &= \text{orth}\Big( \big[\text{orth}([\by(\bu_1) , \ldots,  \by(\bu_k)]),\, 
                                                                 \text{orth}([\blambda_\by(\bu_1) , \ldots,  \blambda_\by(\bu_k)]),\,  
                                                                  \text{orth}(\by_\bu(\bu_k)) \big]\Big), \\
  \BV^p&= \text{orth}\Big( \big[\text{orth}([\bp(\bu_1) , \ldots,  \bp(\bu_k)]),\, 
                                                \text{orth}([\blambda_\bp(\bu_1) , \ldots,  \blambda_\bp(\bu_k)]),\, 
                                                \text{orth}(\bp_\bu(\bu_k)) \big]\Big),  \\
  \BV^t&= \text{orth}\Big( \big[\text{orth}([\BT(\bu_1) , \ldots,  \BT(\bu_k)]),\, 
                                        \text{orth}([\blambda_\BT(\bu_1) , \ldots,  \blambda_\BT(\bu_k)]),\, 
                                        \text{orth}(\BT_\bu(\bu_k)) \big]\Big).
\end{align*}
To ensure stability of the ROM, the velocity ROM is enriched following \cite{GRozza_KVeroy_2007a}.
See \cite[Secs.~2.8.3, 5.5.1]{DSGrundvig_2025a} for implementation details. The velocity ROM is
$\BV^y= \text{orth}\Big(\big[ \widetilde\BV^y, \BV^y_{\text{enr}}\big]\Big)$.

%!TEX root = main.tex
%%%%%%%%%%%%%%%%%%%%%%%%%%%%%%%%%%%%%%%%%%%%%%%%%%%

%%%%%%%%%%%%%%%%%%%%%%%%%%%%%%%%%%%%%%%%%%%%%%%%%%%
\subsection{Results}
We apply the ROM model based SQP Algorithm~\ref{alg:inexact} and the  FOM only Algorithm~\ref{alg:exact} 
to solve the unconstrained problem~\eqref{eq:boussi:unc-opt-disc} and the constrained problem~\eqref{eq:boussi:const-opt-disc}.
For all constrained problems, the initial Lagrange multiplier estimate was set to $\lambda_0=\big(c'(\bu_0)^T c'(\bu_0)\big)^{-1} c'(\bu_0)\nabla f(\bu_0)$
(FOM only Algorithm~\ref{alg:exact}) or 
$\lambda_0=\big(h_0'(\bu_0)^T h_0'(\bu_0)\big)^{-1} h_0'(\bu_0)\nabla f(\bu_0)$ (ROM model based SQP Algorithm~\ref{alg:inexact}).
The control was initialized to zero, $\bu_0 = \bzero$, for all runs.
The matrix $H_k$, which replaces the Hessian $\nabla_{\bu\bu}^2\mathcal{L}$, is replaced with a modified Gauss-Newton Hessian approximation in order to ensure
positive definiteness. Specifically, for the unconstrained problem~\eqref{eq:boussi:unc-opt-disc},
$H_k =  \bv_{\bu}(\bu_k)^T \BI_\cF^T \BQ^T \BQ  \BI_\cF \bv_{\bu}(\bu_k)   + \omega \BG$, where $\bv_{\bu}(\bu_k)$ is the sensitivity
of the discretized velocity with respect to the control.
\sloppy
For the constrained problem~\eqref{eq:boussi:const-opt-disc}.
$H_k =  \bv_{\bu}(\bu_k)^T \BI_\cF^T \BQ^T \BQ  \BI_\cF \bv_{\bu}(\bu_k) - 2 \lambda_k  \BT_{\bu}(\bu_k))^T  \BI_{\cF_T}^T\BK  \BI_{\cF_T} \BT_{\bu}(\bu_k)  + \omega \BG$,
if $\lambda_k < 0$, and
$H_k =  \bv_{\bu}(\bu_k)^T \BI_\cF^T \BQ^T \BQ  \BI_\cF \bv_{\bu}(\bu_k)   + \omega \BG$, if $\lambda_k \ge 0$.
Here $\bv_{\bu}(\bu_k)$ is the sensitivity of the discretized velocity with respect to the control, and $\BT_{\bu}(\bu_k)$ is the sensitivity of the discretized 
velocity with respect to the control. For the ROM problems, the Hessian approximations are defined analogously, with 
$\bv_{\bu}(\bu_k)$ and $\BT_{\bu}(\bu_k)$ replaced by their ROM approximations.
We use matrix-free methods, and these Hessians $H_k$ are never explicitly computed.
Hessian $H_k$ times vector products can be computed efficiently using linearized state and adjoint solves. 
See \cite[Sec.~4.3]{DSGrundvig_2025a} for implementation details.
For the SQP algorithm, the KKT system \eqref{eq:exact-kkt} is solved using the projection onto the equivalent unconstrained problem 
\eqref{eq:s-prob-red}, and the latter is solved using the conjugate gradient method, which only requires matrix $P_k H_k P_k $ times vector
products; the matrices $P_k$ and $H_k$ are not formed explicitly.
See the discussion at the end of Section~\ref{sec:exact-merit-step}.

Table~\ref{tb:exact-comp} below summarizes the algorithmic performance of the various methods.
Four different parameter sets were used;
in all experiments, the Prandtl number was fixed at $Pr=0.72$,
while the Reynolds numbers for the various trials were $Re=$ 10, 100, and 200 and the corresponding Grashof numbers were $Gr=$ 2000,\ 20,000,\ and 40,000.
Note that $Re=200$, $Gr=$ 40,000, $Pr=0.72$ are the same parameter values reported in \cite{KIto_SSRavindran_1998b}.
The grids for the Reynolds number $10$ case had $N=21,184$ degrees of freedom while the grids for 
Reynolds number $100$ and $200$ cases had $N=58,184$ degrees of freedom.

\begin{table}[!hbt]
  \centering
  \begin{tabular}{ c | c c c c c c }
    Algorithm & Iters & Evals & $f(\bu)$ & $\|\nabla f(\bu)\|$ & $|c(\bu)|$ & $\|\nabla_{\bu} \mathcal{L}(\bu,\lambda) \|$ 	\\ \hline
  %  Uncon.\ BFGS $Re=10$ & 90 & 109 & 1.43e-01 & 3.77e-06 & - & - \\ 
    Uncon.\ NCG $Re=10$ & 10 & 11 & 1.43e-01 & 2.52e-06 & - & - \\ 
    Constrained $Re=10$ & 12 & 29 & 2.15e-01 & - & 5.27e-11 & 6.54e-07 \\ 
    ROM $Re=10$ & 5 & 6 & 2.15e-01 & - & 6.25e-11 & 4.39e-06 \\ 
    \hline
  %  Uncon.\ BFGS $Re=100$ & 87 & 95 & 3.37e-01 & 4.74e-06 & - & - \\ 
    Uncon.\ NCG $Re=100$ & 12 & 16 & 3.38e-01 & 1.86e-06 & - & - \\ 
    Constrained $Re=100$ & 41 & 105 & 4.09e-01 & - & 1.74e-09 & 2.25e-05 \\ 
    ROM $Re=100$ & 6 & 7 & 4.09e-01 & - & 4.26e-09 & 3.82e-06 \\ 
    \hline
 %   Uncon.\ BFGS $Re=200$ & 83 & 89 & 4.93e-01 & 2.36e-06 & - & - \\ 
    Uncon.\ NCG $Re=200$ & 22 & 57 & 4.93e-01 & 2.94e-05 & - & - \\ 
    Constrained $Re=200$ & 17 & 63 & 5.42e-01 & - & 3.04e-10 & 1.10e-05 \\ 
    ROM $Re=200$ & 6 & 7 & 5.42e-01 & - & 1.02e-09 & 3.55e-05 \\
  \end{tabular}
  \caption{Performance of the various optimization algorithms on the Boussinesq equations.
  Includes number of iterations, number of FOM function evaluations,
  final objective and first order necessary conditions.}
  \label{tb:exact-comp}
\end{table}

Table~\ref{tb:rom-results} compares the performance of the model based SQP Algorithm~\ref{alg:inexact} (ROM)
with the FOM only Algorithm~\ref{alg:exact} (FOM).
\begin{table}[tb]
  \centering
  \begin{tabular}{ c | c c c c c }
    Algorithm & Iters & FOM & ROM & (max) DOFs \\ \hline
    % FOM $Re=1$ & 13 & 31 & - & 21,184 \\ 
    % ROM $Re=1$ & 3 & 4 & 13 & 168 \\ 
    % \hline
    FOM $Re=10$ & 12 & 29 & - & 21,184 \\ 
    ROM $Re=10$ & 5 & 6 & 35 & 184 \\ 
    \hline
    FOM $Re=100$ & 41 & 105 & - & 58,184 \\ 
    ROM $Re=100$ & 6 & 7 & 70 & 192 \\ 
    \hline
    FOM $Re=200$ & 17 & 63 & - & 58,184 \\ 
    ROM $Re=200$ & 6 & 7 & 64 & 192 \\
  \end{tabular}
  \caption{Evaluation counts for exact SQP algorithm and SQP algorithm with models.
  Reports number of outer iterations, number of expensive FOM evaluations, number of ROM evaluations and number of degrees of freedom (DOFs) for state solve.
  The DOFs are constant for FOM, but the ROM size changes during the outer iterations.}
  \label{tb:rom-results}
\end{table}
Note that the runs reported are the same as those in Table~\ref{tb:exact-comp}.
Reported in the table is the number of iterations (Iters) that the algorithms required. 
Note that in the case of the ROM algorithm, this refers to the outer iterations and not the inner iterations required to approximately solve the subproblem~\eqref{eq:inexact-nlp}.
Also reported are the number of high fidelity FOM evaluations required, the number of low dimensional ROM solutions used,
and the number of degrees of freedom of the underlying state solve, in the ROM case this refers to the maximum size of the system.
For all test problems, the ROM algorithm outperformed the FOM only counterpart in terms of number of iterations and expensive evaluations required.
In the $Re=100$ case, the ROM algorithm was able to decrease the number of FOM evaluations by an order of magnitude.
Even when taking into account ROM evaluations (a significantly smaller system to solve) the total number of evaluations was comparable or favorable for the model based algorithm.
In all tests the ROMs were more than two orders of magnitude smaller than the corresponding FOM systems.

Figure~\ref{fig:outer-fonc-prog} below reports the per-iteration progress of the first order necessary conditions for each experiment, along with the penalty parameter values.
Shown in each of the left subfigures is the value of the norm of the constraint $\|c(\bu)\|_1$ versus the number of outer iterations
as well as $\|\nabla f(\bu) - c'(\bu)^T\lambda\|$ for both the ROM based algorithm and the FOM only algorithm.
It is important to note that for the ROM algorithm, each iteration reports outer iterations only.
In each outer iteration, several inner iterations (where the ROM only subproblem \eqref{eq:inexact-nlp} is being solved) may be taking place.
For the ROM plots, each dot represents one expensive FOM simulation so the direct comparison to the FOM only algorithm is appropriate.
For Reynolds number $100$, the FOM algorithm makes significant progress in the first few iterations then stalls out for several iterations.
In this case, this potential slowdown is avoided by the ROM algorithm since incremental progress is handled in the inner iterations (see Figure~\ref{fig:inner-fonc-prog}) where expensive evaluations are not used.
In the bottom figures, the value of the $\ell_1$-merit function is reported.
In all experiments, the penalty parameter $\rho_k$ remained fixed at one for both the ROM and FOM algorithms.
This is reflected in the right plots by the lack of notable decrease of the $\ell_1$-merit function in later iterations.
Since the penalty parameter is not increased, once the constraint violation becomes small, the value of the penalty parameter is dominated by the objective value.
\begin{figure}[!htb]
	  \includegraphics[width=0.32\textwidth]{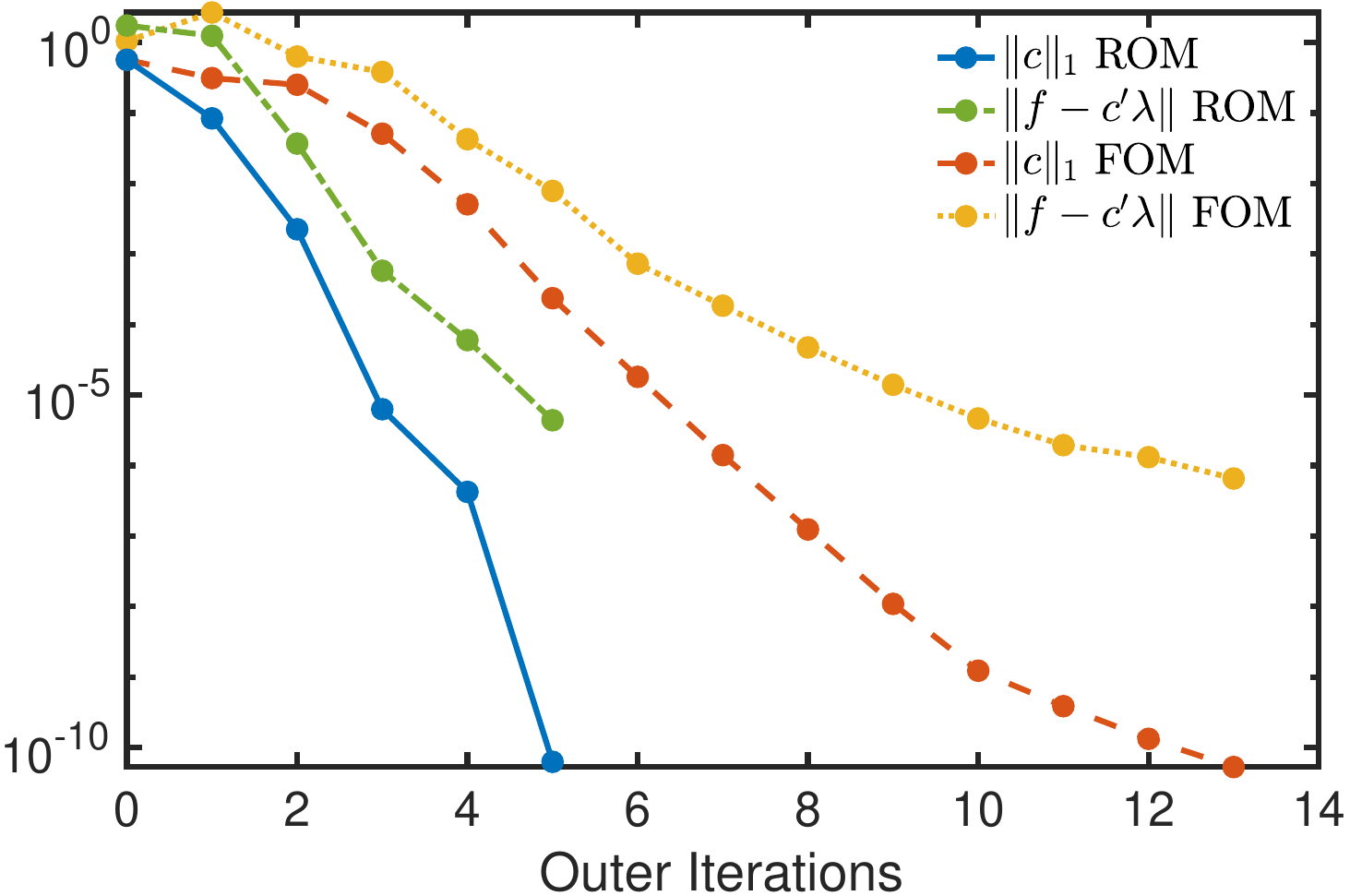}\hfill
	  \includegraphics[width=0.32\textwidth]{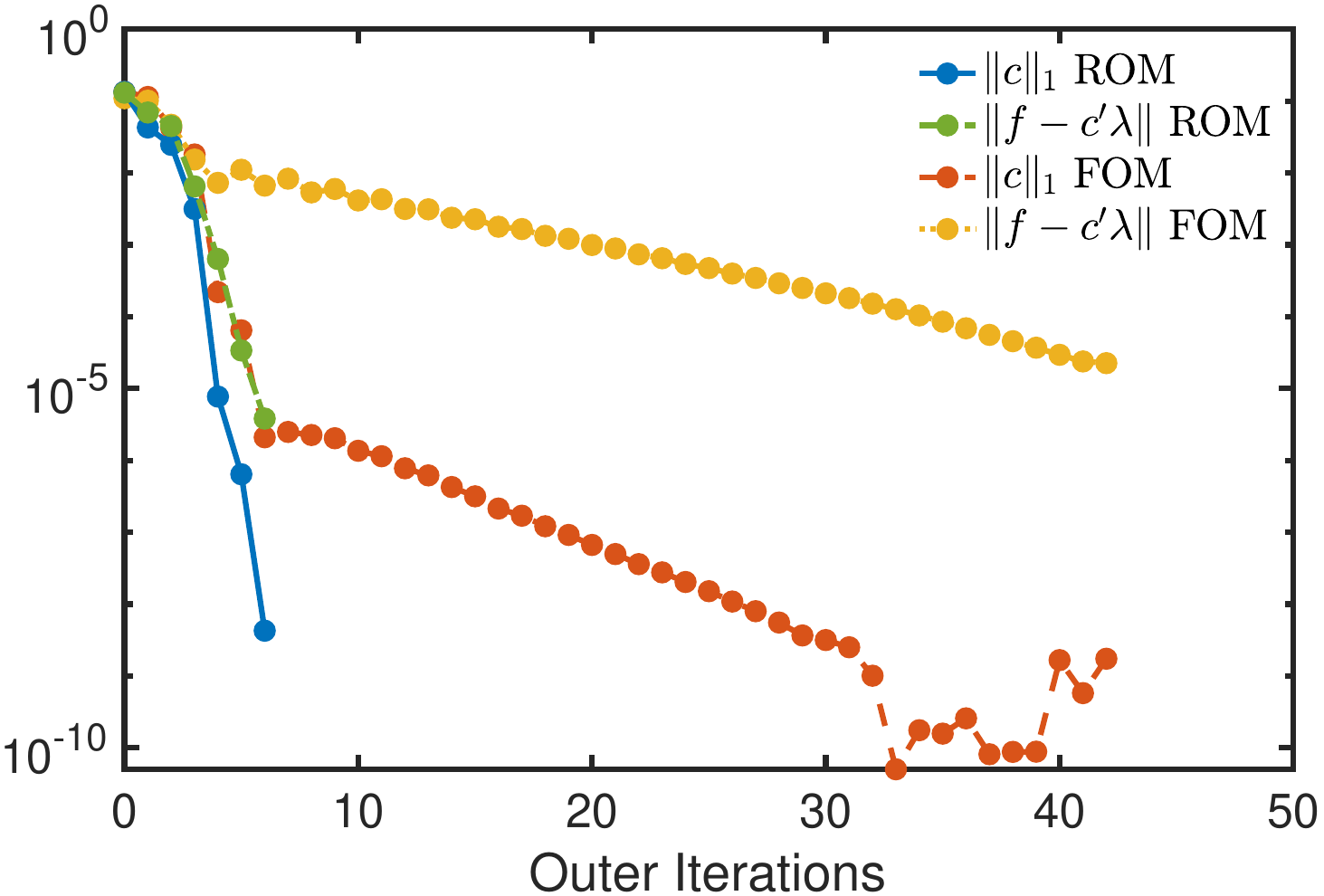} \hfill
	  \includegraphics[width=0.32\textwidth]{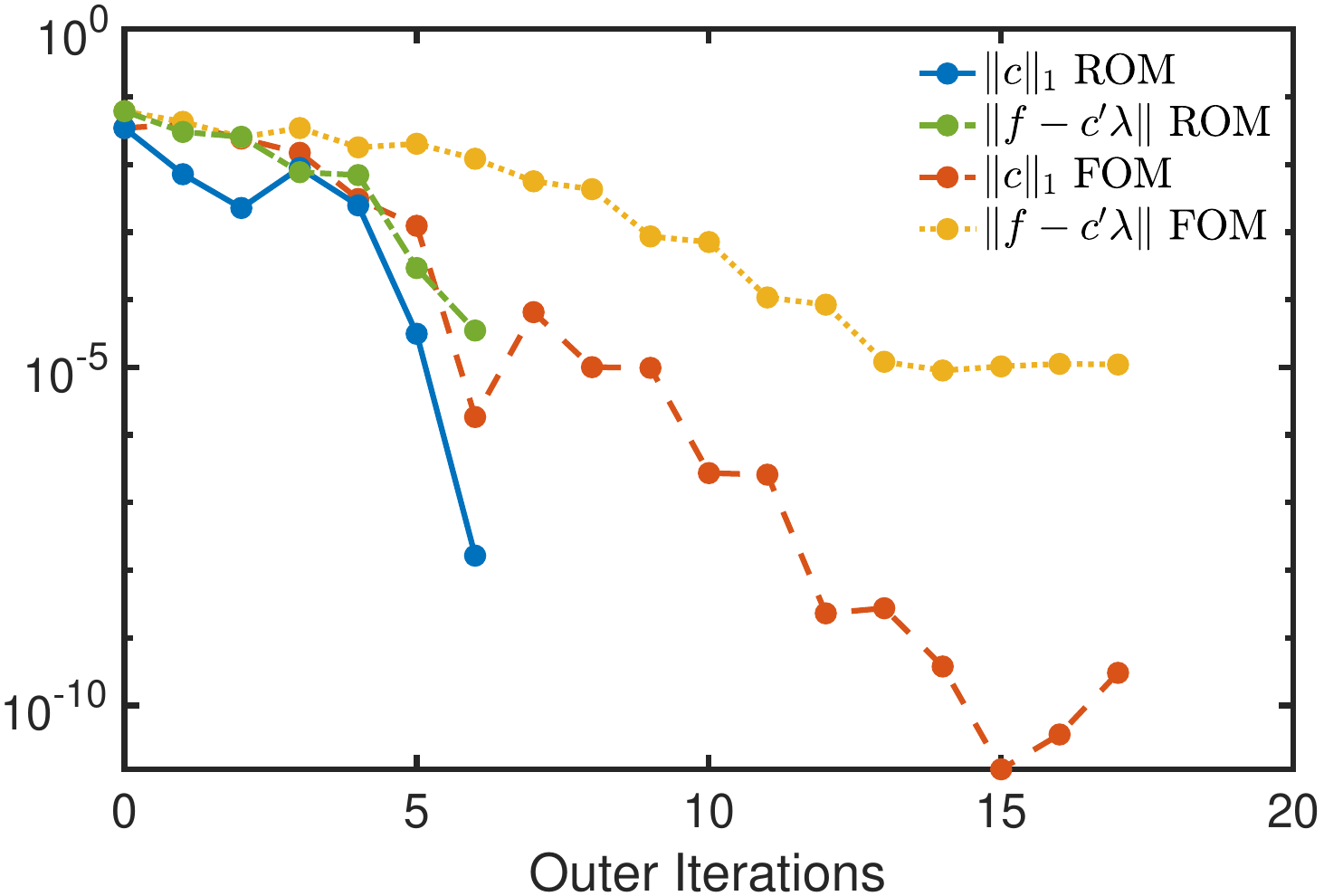} 
	  
	  \includegraphics[width=0.32\textwidth]{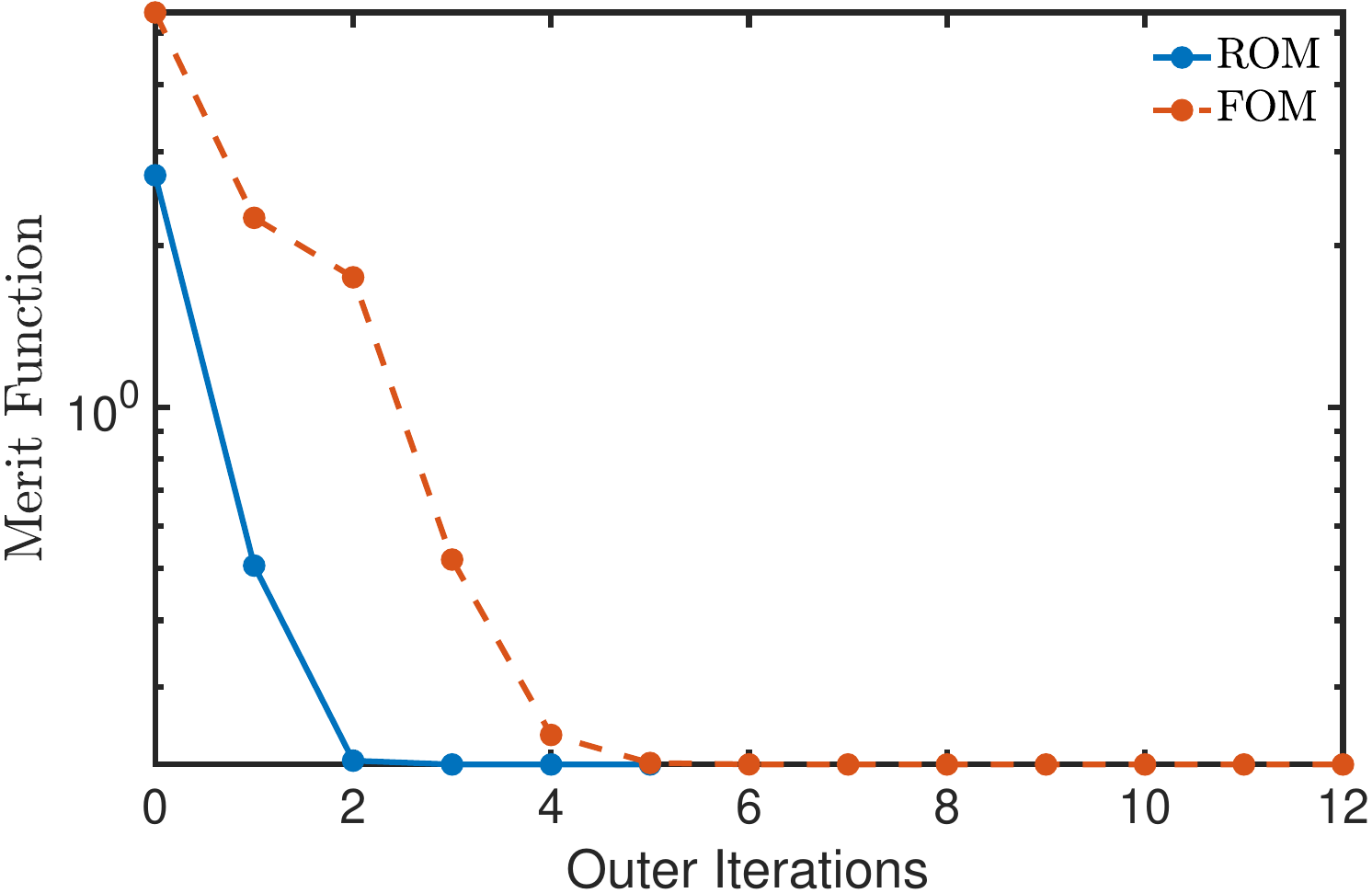} \hfill
	  \includegraphics[width=0.32\textwidth]{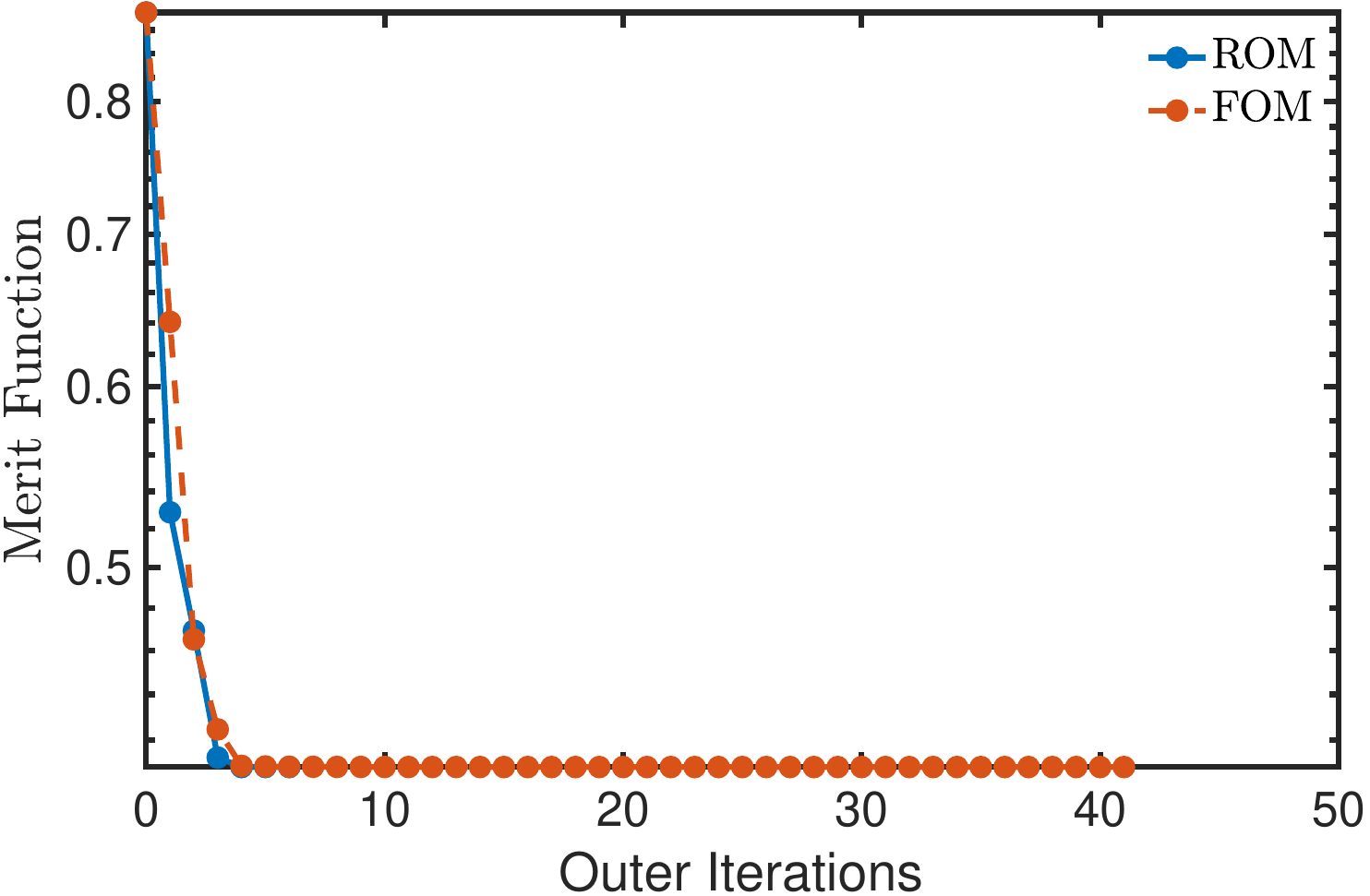}  \hfill
	  \includegraphics[width=0.32\textwidth]{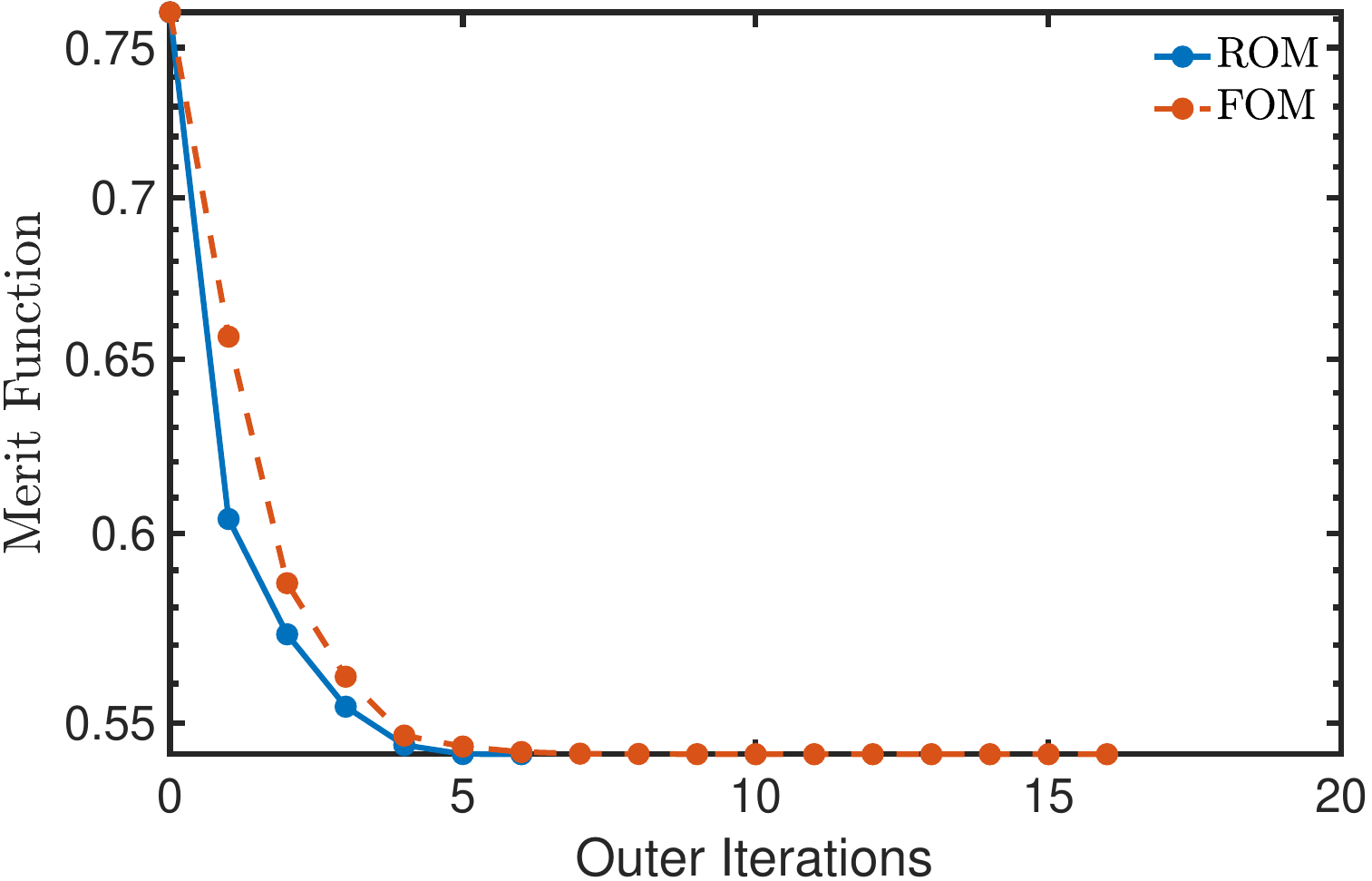} 

  \caption{Iteration history for the first order necessary conditions and penalty parameter.
    Compares progress for ROM algorithm and FOM only algorithm, ROM reports only outer iterations.
    Each column corresponds to a different experiment, left is $Re=10$, middle is $Re=100$, rught is $Re=200$.
    Plots at the tops show the first order necessary conditions, plots at the bottom show the $\ell_1$-merit function values.}
	\label{fig:outer-fonc-prog}
\end{figure}

Figure~\ref{fig:inner-fonc-prog} compares the progress of the ROM algorithms for the different values of the Reynolds number.
The left plot reports $\|h_k(\bu)\|_1$ versus the total number of iterations, the right plot reports $\|\nabla m_k(\bu) - h_k'(\bu)^T\lambda\|$.
The large dots are the outer iterations, while the smaller dots are inner iterations.
The value of the first-order necessary conditions increases in outer iterations because the FOM is solved and a new ROM is constructed.
All values reported are for the ROM models, however, the outer iteration values match the true objective because the FOM is solved at those values.
One of the main advantages of the ROM algorithm is highlighted by the results in Figure~\ref{fig:inner-fonc-prog};
in later iterations where the FOM only algorithm tends to stall out, the ROM algorithm continues to make progress during outer iterations.
\begin{figure}[t]
	\includegraphics[width=0.42\textwidth]{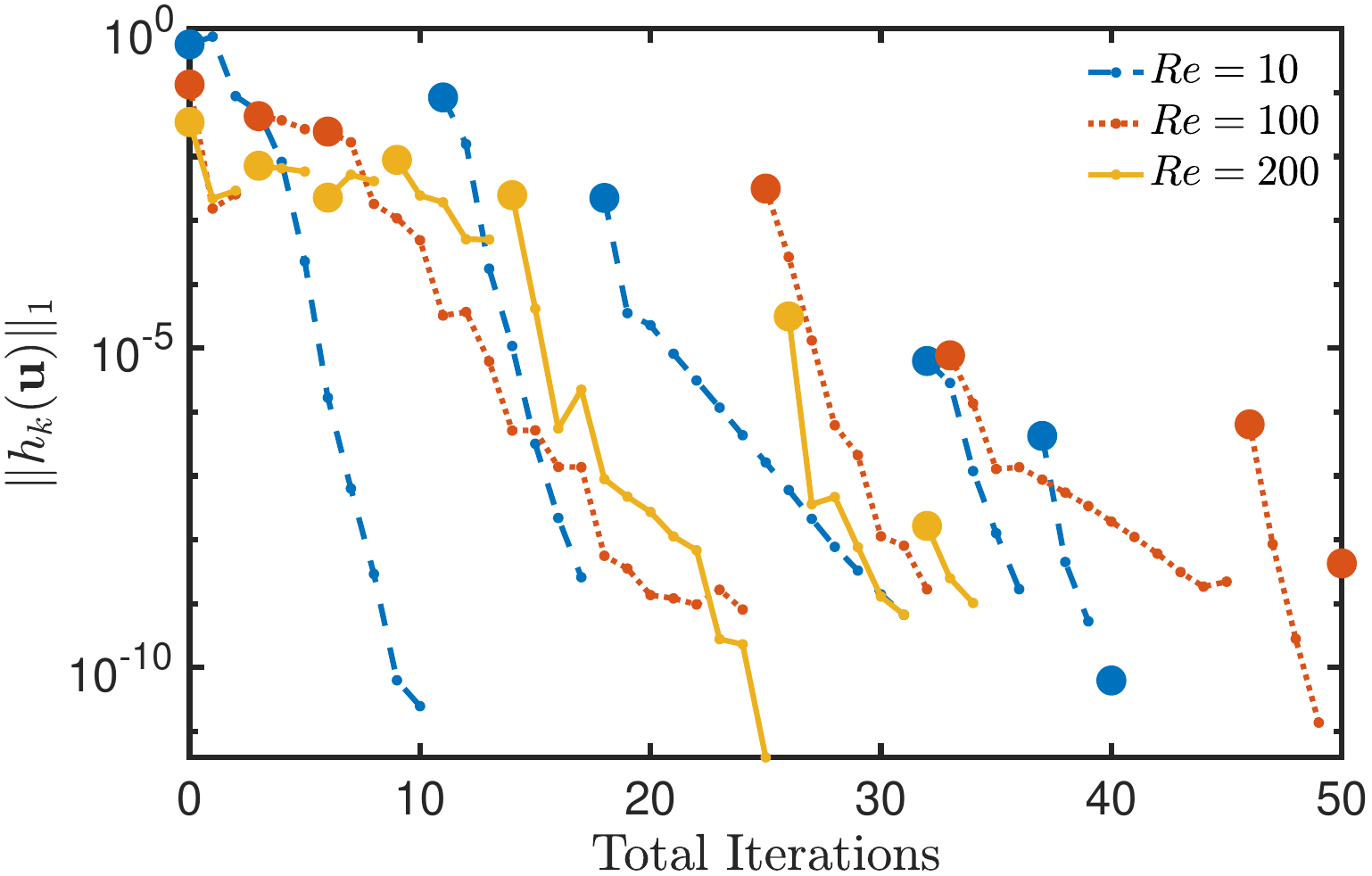} \hfill
	\includegraphics[width=0.42\textwidth]{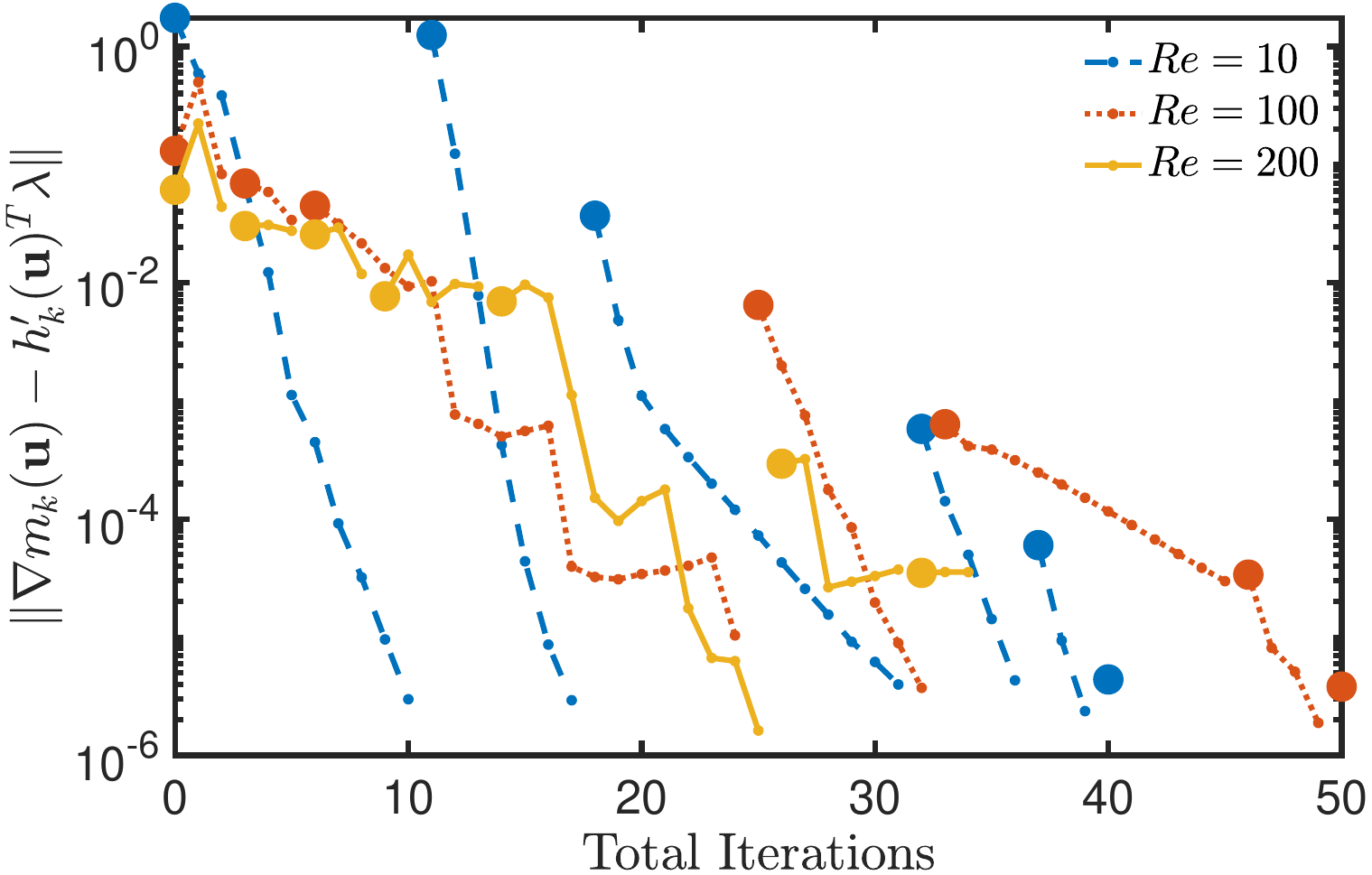}
	
  \caption{Iteration history for the first order necessary conditions.
    Large dots represent outer iterations, small dots inner iterations.
    Left plot shows the constraint $\|h(\bu)\|_1$, right plot $\|m_k(\bu) - h'(\bu)^T\lambda_{k+1}\|$.}
	\label{fig:inner-fonc-prog}
\end{figure}

The remaining figures show, in order, the temperature boundary control \eqref{eq:ecmod:BS:boussi-eq-cntrl} for the unconstrained and constrained problem.
The zero control velocity profile in the region $D$ behind the step where vorticity is being controlled,
and the optimal velocity profiles for the unconstrained and constrained problems.
The zero control temperature profile with the optimal profiles for the unconstrained and constrained problems.
Because of page limitations, we only show the plots for $Re=200$. For additional results and plots see \cite{DSGrundvig_2025a}.

%%%%%%%%%%%%%%%%%%%%%%%%%%%%%%%%%%%%%%%%%%%%%%%%%%%%%%%%%%%%%%%%%%%%%%%%
%%%%%%%%%%%%%%%%%%%  Re=200  %%%%%%%%%%%%%%%%%%%%%%%%%%%%%%%%%%%%%%%%%%%
%%%%%%%%%%%%%%%%%%%%%%%%%%%%%%%%%%%%%%%%%%%%%%%%%%%%%%%%%%%%%%%%%%%%%%%%

\begin{figure}[!hbt]
          \includegraphics[width=0.49\textwidth]{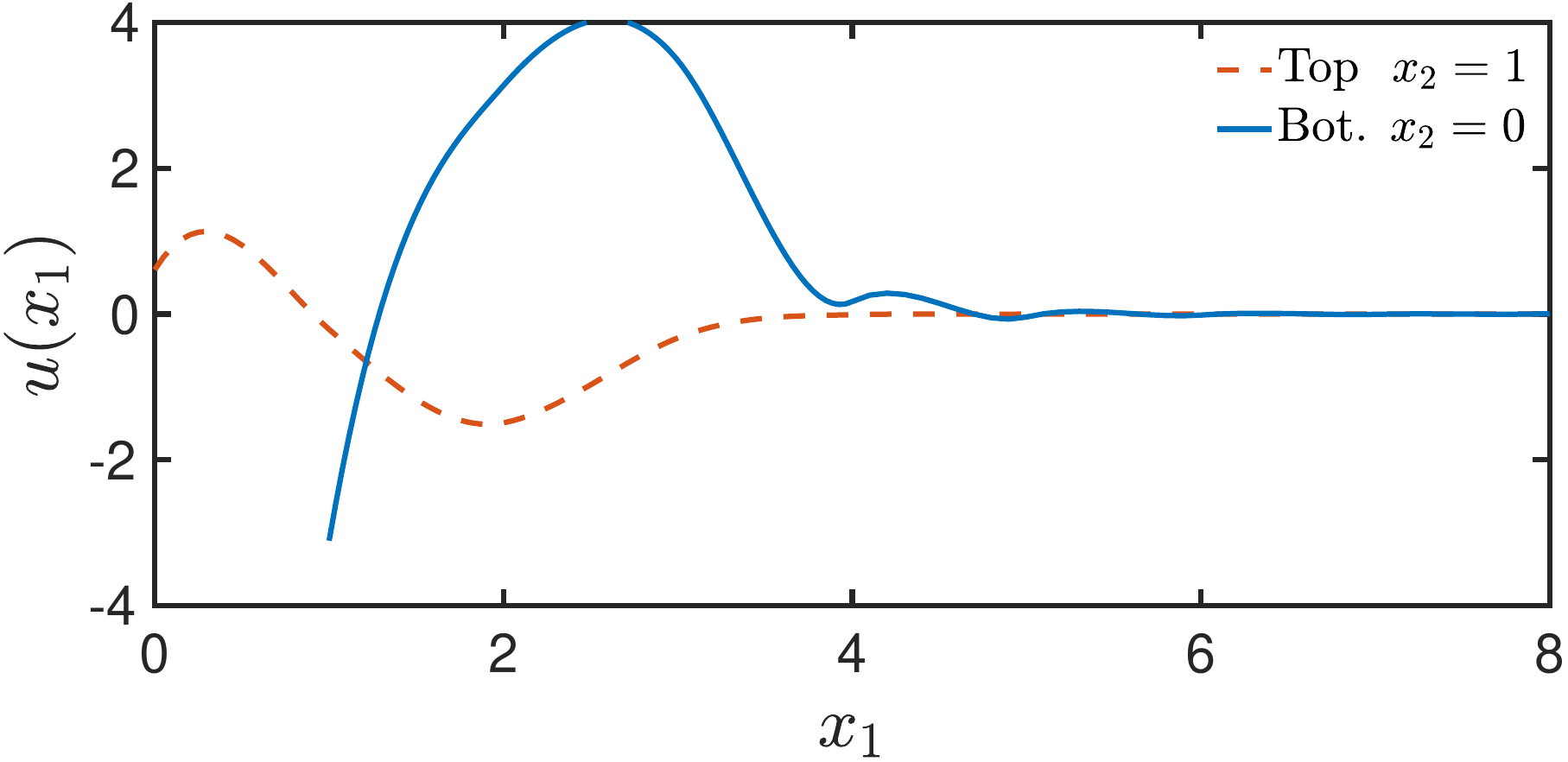} \hfill
	  \includegraphics[width=0.49\textwidth]{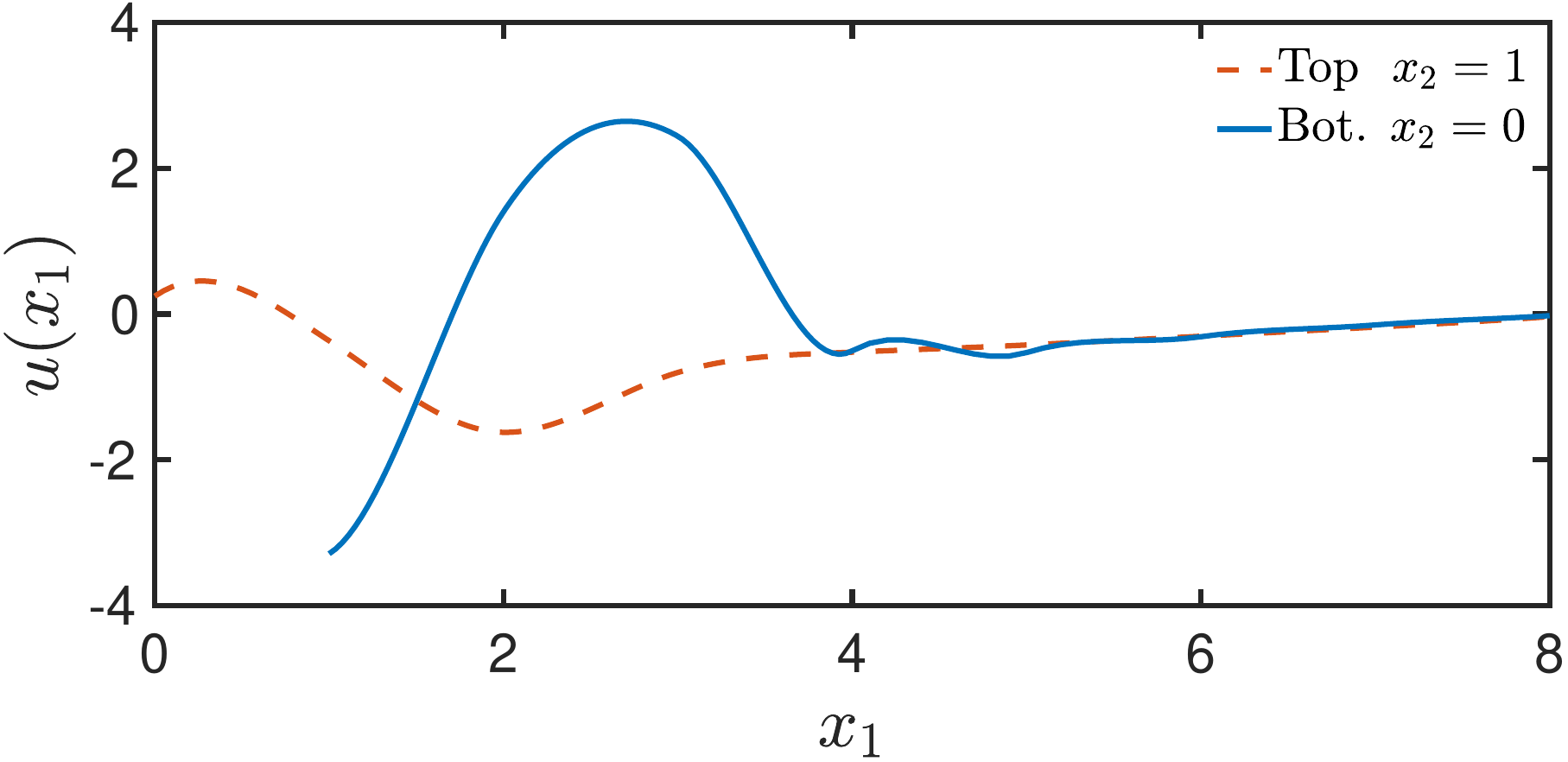}

  \caption{Optimal control for $Re=200$, $Gr=$ 40,000, $Pr=0.72$ with 58,184 degrees of freedom.
  Left is unconstrained problem, right is constrained problem.}
	\label{fig:opt-control-unc-Re200}
\end{figure}

%\begin{figure}[!hbt]
%	\centering
%  \includegraphics[width=0.9\textwidth]{figures/init_velocityRe200}
%	\includegraphics[width=0.9\textwidth]{figures/unc_velocityRe200}
%	\includegraphics[width=0.9\textwidth]{figures/con_velocityRe200}
%  \caption{Velocity plot comparing uncontrolled velocity (top) to optimal velocity for unconstrained (middle) and constrained (bottom) problems,
%  $Re=200$, $Gr=$ 40,000, $Pr=0.72$ with 58,184 degrees of freedom.}
%	\label{fig:velocity-Re200}
%\end{figure}
%

\begin{figure}[!hbt]
         \includegraphics[width=0.32\textwidth]{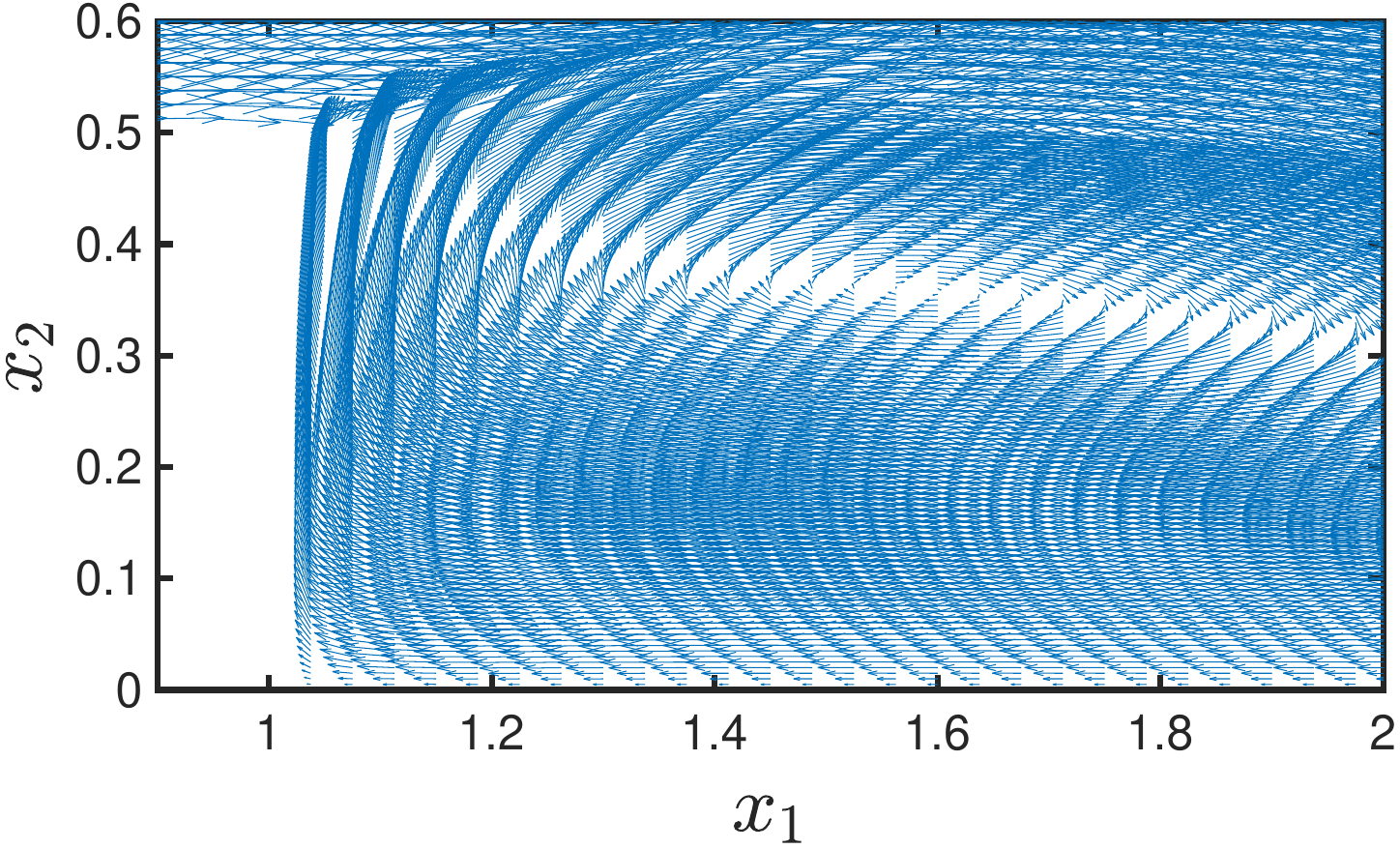} \hfill
 	 \includegraphics[width=0.32\textwidth]{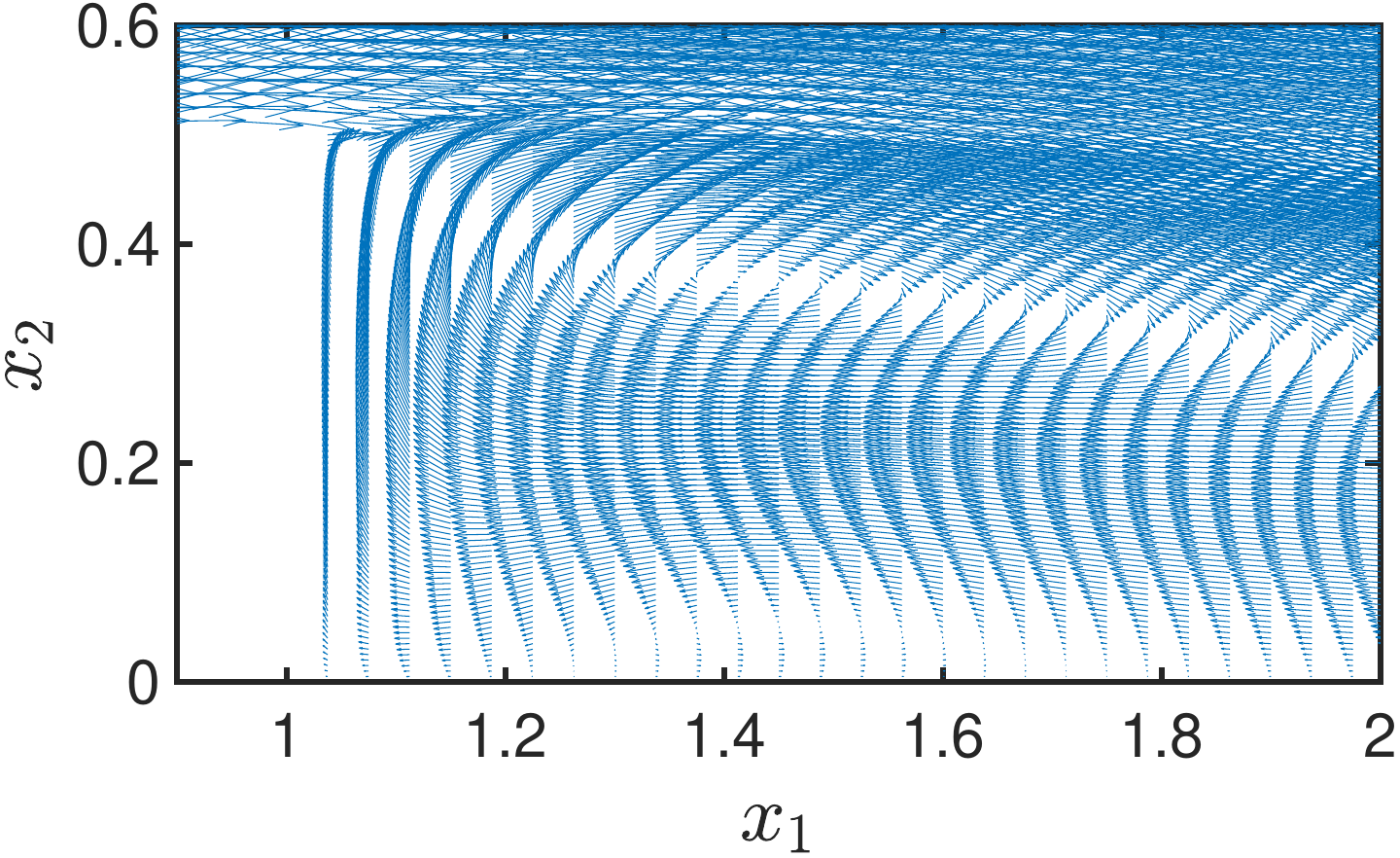}  \hfill
         \includegraphics[width=0.32\textwidth]{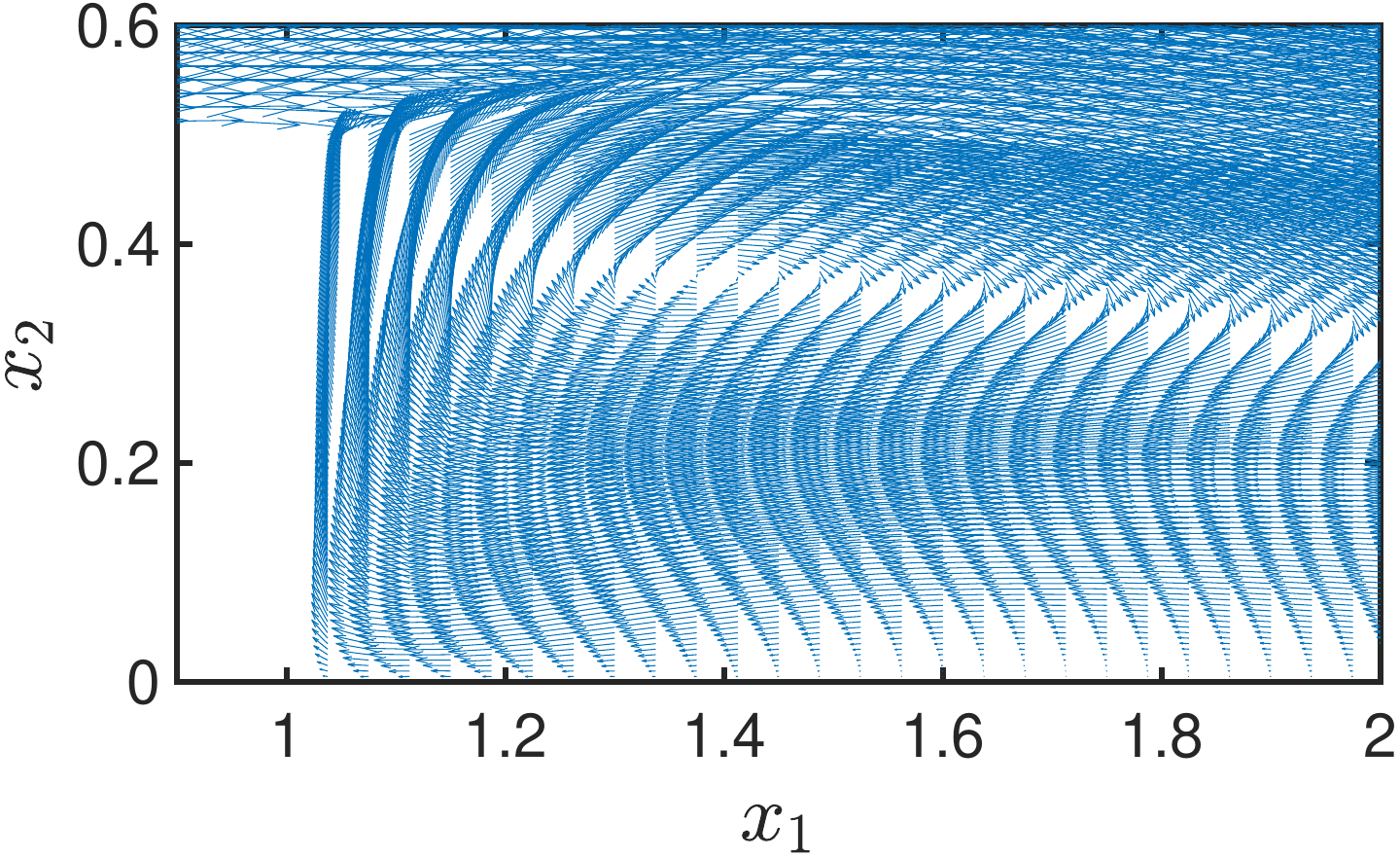}

  \vspace*{-2ex}
  \caption{Uncontrolled velocity (left), optimal velocity for unconstrained (middle), and optimal velocity for constrained (right) problems,
  $Re=200$, $Gr=$ 40,000, $Pr=0.72$ with 58,184 degrees of freedom.}
	\label{fig:channel-Re200}
\end{figure}

\begin{figure}[!hbt]
	\centering
  \includegraphics[width=0.9\textwidth]{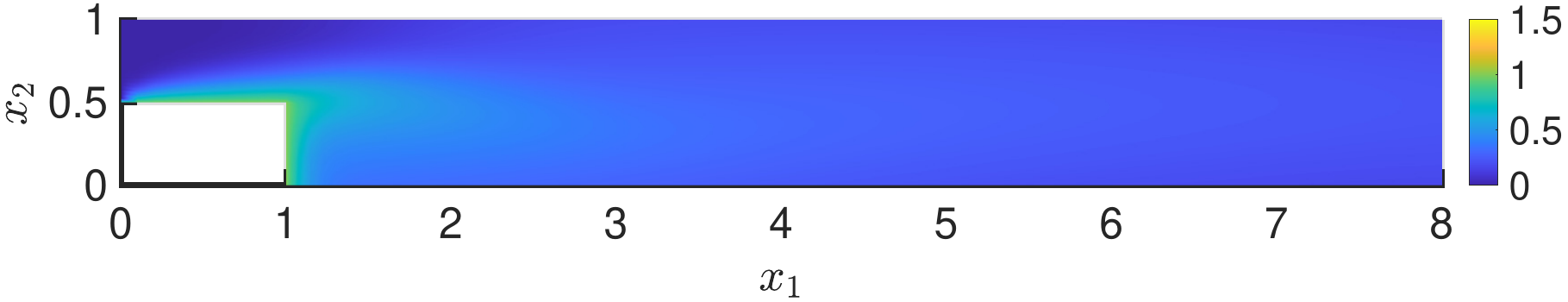}
	\includegraphics[width=0.9\textwidth]{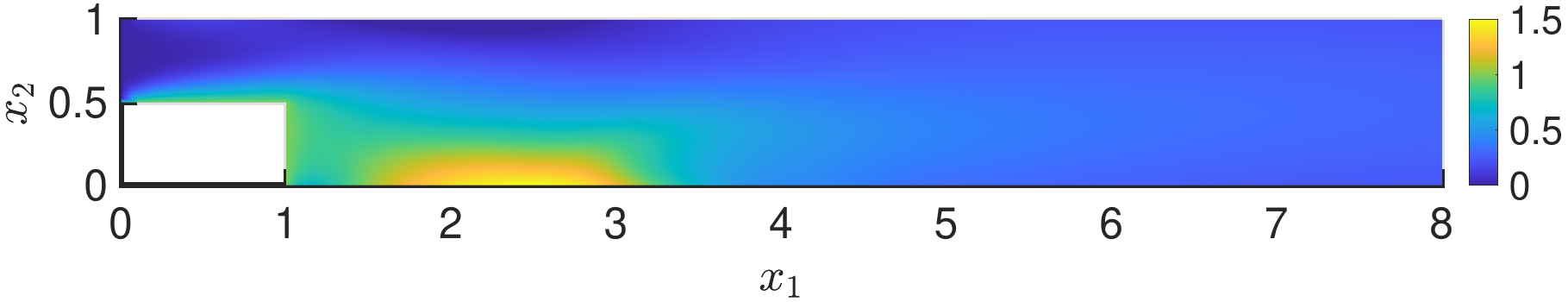}
	\includegraphics[width=0.9\textwidth]{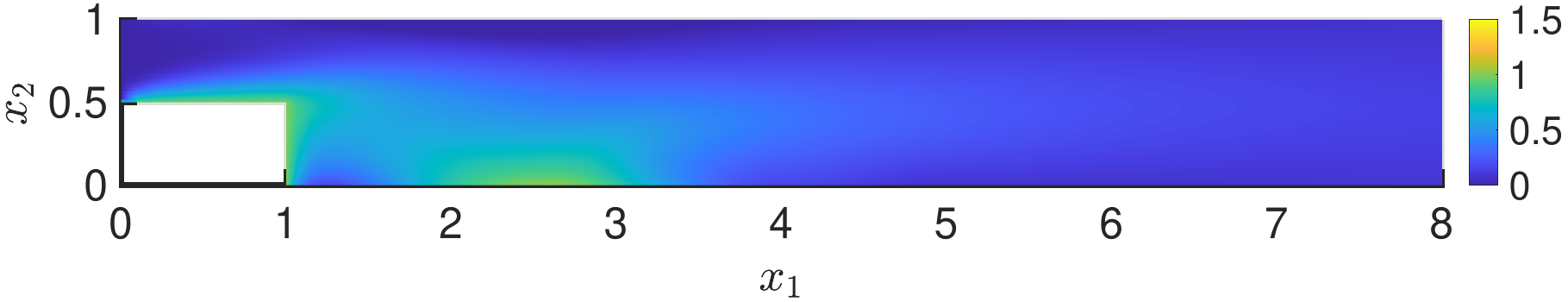}
	
  \vspace*{-1ex}
  \caption{Temperature plot comparing uncontrolled (top), optimal unconstrained (middle) and optimal constrained (bottom) problems,
  $Re=200$, $Gr=$ 40,000, $Pr=0.72$ with 58,184 degrees of freedom.}
	\label{fig:temperature-Re200}
\end{figure}

\pagebreak
%!TEX root = main.tex
%%%%%%%%%%%%%%%%%%%%%%%%%%%%%%%%%%%%%%%%%%%%%%%%%%%%%%%%%%%%%%%%%%%%%%%%%
\section{Conclusions}

We have introduced a generalized line-search SQP algorithm with $\ell_1$-merit function
that uses objective function and constraint function models with tunable accuracy for the solution of smooth equality-constrained optimization problems.
The algorithm was motivated by an algorithm developed in \cite{DSGrundvig_MHeinkenschloss_2025a} for unconstrained problems.
Our algorithm leverages efficient approximate models with error bounds and tunable accuracy to reduce the number of expensive objective and constraint function evaluations.
We presented a general convergence result for our generalized line-search SQP algorithm and provided the details of an implementable algorithm.
Specifically, we proved that our algorithm has the same first-order global convergence properties as the traditional  line-search SQP algorithm with $\ell_1$-merit function,
provided the objective and constraint function models meet the required accuracy requirements. Our algorithm only uses these models and 
corresponding error functions, but never directly accesses the original objective function. The accuracy requirement of the model are adjusted 
to the progress of the optimization algorithm.
In addition, we proved that for a large class of commonly used model functions based on ROMs, the constructed models meet the
accuracy requirements specified by our algorithm.

Our algorithm was applied to a boundary control problem governed by the Boussinesq PDE, and models were computed using ROMs.
We demonstrated that with appropriately chosen models, the algorithm demonstrates significant reductions both in the number of outer iterations
and in the number of expensive full-order PDE solves required to reach an optimal solution, when compared to optimization strategies 
using solely exact objective and constraint function information.

There are several avenues for future research.
Currently, no hyperreduction is used in the Boussinesq PDE ROMs. However, it seems possible to adapt the
techniques from \cite{TWen_MJZahr_2025a} to introduce hyperreduction in a systematic way to arrive at truly
efficient algorithms. This would combine the hyperreduction of \cite{TWen_MJZahr_2025a} with the comprehensive
convergence analysis of our optimization algorithm.

As reported in \cite{DSGrundvig_MHeinkenschloss_2025a} for unconstrained problems, the construction of the ROM
has a big impact on the overall performance. We include sensitivity information to construct ROMs, which has shown
to be beneficial, e.g., in \cite{DSGrundvig_MHeinkenschloss_2025a}, \cite{TWen_MJZahr_2023a},
However, when sensitivity information is added, the ROM size depends on the number of optimization variables.
The shape optimization examples in  \cite{DSGrundvig_MHeinkenschloss_2025a}, \cite{TWen_MJZahr_2023a} 
have a relatively small number of shape parameters and ROM sizes stay small, whereas in our example, the number of 
discrete controls was relatively large ($n_u=34$), resulting in larger ROMs. Exploration of other ROM constructions will be
beneficial. 

Finally, numerical exploration using different asymptotic error estimates on the overall performance are useful.
For unconstrained problems, results in this direction are contained in \cite[Ch.~3]{DSGrundvig_2025a}.

%\section*{Acknowledgements}
%xx

\bibliography{references}
\bibliographystyle{plainurl}

\end{document}